\theoremstyle{plain}
\newtheorem{theorem}{Theorem}[section]
\newtheorem{corollary}[theorem]{Corollary}
\newtheorem{lemma}[theorem]{Lemma}
\newtheorem{proposition}[theorem]{Proposition}
\newtheorem{remark}[theorem]{Remark}
\numberwithin{equation}{section}
\newtheorem*{hyp*}{Hypothesis} 
\newtheorem{hyp}[theorem]{Hypothesis} 
\begin{document}

\title[$2\times2$ Hypergeometric operators with diagonal eigenvalues]
{$2\times2$ Hypergeometric operators with diagonal eigenvalues}
\author[C. Calder\'on]{C. Calder\'on}
\author[Y. Gonz\'alez]{Y. Gonz\'alez}
\author[I. Pacharoni]{I. Pacharoni}
\author[S. Simondi]{S. Simondi}
\author[I. Zurri\'an]{I. Zurri\'an}
\email[C. Calder\'on]{celeste.calderon@fce.uncu.edu.ar}
\email[Y. Gonz\'alez]{ygonzalez@fcen.uncu.edu.ar}
\email[I. Pacharoni]{pacharon@famaf.unc.edu.ar}
\email[S. Simondi]{ssimondi@uncu.edu.ar}
\email[I. Zurri\'an]{zurrian@famaf.unc.edu.ar}
\thanks{This research was supported in part by CONICET grant PIP 112-200801-01533, SeCyT-UNC grant 05/B368 and SECTYP-UNCUYO grant M052}
\address[C. Calder\'on, Y. Gonz\'alez, S. Simondi]{Facultad de
Ciencias Exactas y Naturales, Universidad Nacional de Cuyo, 5500 Mendoza,
Argentina}
\address[I. Pacharoni, I. Zurri\'an]{FAMAF-CIEM, Ciudad Universitaria, 5000 C\'ordoba, Argentina}
\subjclass[2010]{	42C05; 47S10; 33C45}

\keywords{Matrix-valued Orthogonal Polynomials; Matrix-valued Weight; Matrix-valued Hypergeometric Operator}

\begin{abstract}
In this work we classify all the order-two Hypergeometric operators $D$, symmetric with respect to some $2\times 2$ irreducible matrix-weight $W$ such that $DP_n=P_n\left(\begin{smallmatrix}
\lambda_n&0\\0&\mu_n
\end{smallmatrix}
\right)$ with no repetition among the eigenvalues $\{\lambda_n,\mu_n\}_{n\in\mathbb N_0}$, where 
$\{P_n\}_{n\in\mathbb N_0}$ is the (unique) sequence of monic orthogonal polynomials
with respect to $W$. 

We obtain, in a very explicit way,  a three parameter  family of such operators and weights. We also give the corresponding monic orthongonal polynomials, their three term recurrence relation and their squared matrix-norms.
\end{abstract}
\maketitle
\section{Introduction. }

M.G. Krein started the study of matrix-valued orthogonal polynomials in 1949, settling the general and basic theory in \cite{K49,K71}. The situations in which these polynomials enjoy some extra property, such as the one singled out in \cite{DG86} and generally known as {\it the bispectral property}, may be very interesting and promising in many areas of mathematics and its applications.

The search for concrete instances enjoying bispectrality has received a certain amount of
 attention, after the work started by A. Dur\'an in \cite{D97}. The collection of known examples has been growing in the last fifteen years after the discovery of irreducible examples in \cite{GPT01,GPT02a,GPT03,G03} and in \cite{DG04}.
 
We are interested in $2\times2$ matrix weights  $W$ that admit a symmetric differential operator $D$ of hypergeometric type in the sense of \cite{T03}.

It is known that if $\{P_n\}_{n\in\mathbb N_0}$ is the (unique) sequence of monic orthogonal polynomials with respect to $W$ then we have 
$$
DP_n=P_n\Lambda_n
$$
for every $n$, where $\Lambda_n$ is a $2\times2$ matrix. 

Among the applications of matrix-valued bispectral families one can find the study of time and band limiting  over a non-commutative ring and matrix-valued commuting operators,
 see \cite{GPZ15,CG15,GPZ17,CG17,CGPZ17,GPZ18}. In the scalar version of these applications, the condition that the operator $D$ should have ``simple spectrum'' is crucial. In the matrix-valued case the analogous condition is not completely clear yet and this is matter of further study. Nevertheless, it is necessary to ask for commutativity of all the eigenvalues $ \{\Lambda_n\}_{n\in\mathbb N_0}$ and that all the scalar eigenvalues of all these matrix eigenvalues are different to each other, this is one of the reasons why in this work we  will restrict our attention to the matrix-valued hypergeometric operators such that the corresponding eigenvalues fulfill these conditions, i.e., every $\Lambda_n$ is of the form $\Lambda_{n}=
\left(\begin{smallmatrix}
\lambda _{n} & 0 \\
0 & \mu _{n}
\end{smallmatrix}\right)
,$
with no repetition among the $\{\lambda_n,\mu_n\}_{n\in\mathbb N_0}$. 

\

We classify explicitly all  order-two Hypergeometric operators $D$, symmetric with respect to some $2\times 2$ irreducible matrix-weight $W$ such that 
the matrix eigenvalues $\Lambda_n$ are diagonal matrices  with no repetitions in their entries.
We explicitly give the expression of every member of this family which depends on three parameters $\alpha, \beta ,v\in\mathbb R$. Also, we provide the explicit expression of $W$ as well as of the matrices involved in the three-term recurrence relation satisfied by the monic orthogonal polynomials (see Theorem \ref{CUVAnBn}) and their squared matrix norms (see \eqref{AB}). The three-parameter family we exhibit here generalizes previous examples such as \cite{G03} and \cite{PZ16}.

\smallskip

In Section \ref{prel} we recall some general results and properties regarding the three term recurrence relation satisfied by matrix-valued monic orthogonal polynomials and the notion of equivalence among matrix weights.

In Section \ref{nec},  we find explicitly the necessary conditions 
such that there exists a weight $W$ for which $D$ is symmetric and the eigenvalues $\Lambda_n$ are diagonal matrices with no repetition in their entries.

In Section \ref{Existence}, we give explicitly all the operators $D$ and the weights $W$ with the desired properties. The main result is given in Theorem \ref{main}.  
 Finally, we make some comments connecting the results given here with those in \cite{T03} and with the family of matrix  orthogonal polynomials obtained in \cite{PZ16}.

It is worth to mention that most of results in this paper are simple  but many of the proofs require  technical and long computations. 
In some proofs, we do not fully develop these calculations, nevertheless, we give the main lines and the general scheme for the reader interested in reproducing them.

For previous works on matrix examples of Jacobi type the reader may consider \cite{G03,GPT03,PZ16} for size $2$ and \cite{GPT05,PT06,PR08,KPR12,KPR13,PTZ14,KRR17} for arbitrary size; the methods in those works are completely different to the one employed in this paper. Regarding the matrix-Bochner problem and the studies of the algebra $\mathcal D(W)$ the reader may see the very  recent works \cite{C18,CY18} for general weights and sizes or the older works \cite{T11,Z16} for $2\times2$ cases of Hermite and Gegenbauer, respectively.

\section{Preliminaries}\label{prel}

In this section we introduce some basic notions and results. 
Given a self-adjoint $N\times N$ positive matrix-valued weight function $W(t)$ defined in $(0,1)$, we consider
the skew symmetric bilinear form defined by the  matrix
\[
\left\langle P,Q\right\rangle _{W}=\int_0^1 P^{\ast }(t)W\left( t\right) Q\left(
t\right) dt,
\]%
 for any pair of  $N\times N$ matrix-valued
functions $P(t)$ and $Q(t)$, 
where $P^{\ast }(t)$ denote the conjugate transpose of $P(t)$.

 By following a standard argument,
given for instance in \cite{K49} or \cite{K71},
one shows that the monic orthogonal polynomials
satisfy a three term recurrence relation
\begin{equation}\label{ttrr}
tP_{n}=P_{n+1}+P_{n}B_{n}+P_{n-1}A_{n},\quad \text{ for }  n\in\mathbb N_0, 
\end{equation}
with the convention $P_{-1}=0$ and $A_0=I$, where $A_n, B_n$ are  $N\times N$ matrices depending on $n$ and not in $t$, with $A_n$ nonsingular for any $n$.
Moreover, for the sequence $\{S_n\}_{n\in\mathbb{N}}$, given by the square norms of the monic orthogonal polynomials, i.e. $S_n=\|P_n\|^2=\left\langle P,Q\right\rangle _{W}$, we have 
that the following conditions are fulfilled,
\begin{equation}\label{AB}
A_{n+1}=S_{n}^{-1}S_{n+1,}\qquad S_{n}B_{n}\text{ is hermitian},\quad \forall n\in\mathbb N_0,
\end{equation}
see \cite[Theorem 6.1]{DLR96} or \cite[p.96]{CG05} and the references given therein.

\smallskip

Two weights $W$ and $\tilde W$ are said to be \emph{similar} if there exists
a nonsingular matrix $M$, which does not depend on $t$, such that
\[
\tilde W(t)=M^* W(t)M, \quad \text{ for all } t\in (0,1).
\]
A matrix-weight  $W$ {\it reduces} to a smaller size if there exists a nonsingular
matrix $M$ such that
\[
M^* W(t) M=
\begin{pmatrix}
W_1(t) & 0 \\
0 & W_2(t)%
\end{pmatrix}%
, \quad \text{ for all } t\in (0,1),
\]
where $W_1$ and $W_2$ are weights of smaller size. A matrix-weight  $W$ is said to be {\it irreducible} if it does not reduce to a smaller size. From {\cite[Theorem 4.5]{TZ16}}, we have the following criterium of reducibility for our particular case.
\begin{theorem}\label {red}
The weight $W$ reduces if and only if  there is a nonscalar matrix commuting with every $A_n$ and $B_n$ for $n\in\mathbb N_0$.
\end{theorem}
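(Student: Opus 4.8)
The plan is to prove the two implications separately, translating "reducibility of $W$" into a statement about the existence of a nonscalar matrix that simultaneously commutes with all recurrence coefficients. First I would handle the easy direction: suppose $W$ reduces, so there is a nonsingular $M$ with $M^*W(t)M = W_1(t)\oplus W_2(t)$ for all $t\in(0,1)$. Consider the monic orthogonal polynomials $\widetilde P_n$ for the block-diagonal weight $\widetilde W = W_1\oplus W_2$; by uniqueness of monic orthogonal polynomials and the block structure, $\widetilde P_n = P_n^{(1)}\oplus P_n^{(2)}$ where $P_n^{(i)}$ are the monic orthogonal polynomials for $W_i$, and hence the recurrence coefficients $\widetilde A_n, \widetilde B_n$ of $\widetilde W$ are themselves block-diagonal. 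A similarity $\widetilde W = M^*WM$ changes the monic orthogonal polynomials by $\widetilde P_n(t) = M^{-1}P_n(t)M$ (this is the standard behavior of monic orthogonal polynomials under a similarity of the weight; it follows from uniqueness and the fact that $\langle M^{-1}P M, M^{-1}Q M\rangle_{\widetilde W} = M^*\langle P,Q\rangle_W M$). Plugging into \eqref{ttrr} gives $\widetilde A_n = M^{-1}A_n M$ and $\widetilde B_n = M^{-1}B_n M$. So any matrix $T$ commuting with all $\widetilde A_n, \widetilde B_n$ yields $M T M^{-1}$ commuting with all $A_n, B_n$; taking $T$ to be $I\oplus 0$ (or any nonscalar block-diagonal matrix, which exists since both blocks have positive size) produces the required nonscalar matrix commuting with every $A_n$ and $B_n$.

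For the converse — the direction that actually needs \cite[Theorem 4.5]{TZ16} — suppose $T$ is a nonscalar matrix commuting with every $A_n$ and $B_n$. The idea is to show $T$ can be taken self-adjoint and then to diagonalize it. First, from \eqref{AB} one knows $A_{n+1} = S_n^{-1}S_{n+1}$ and $S_nB_n$ is hermitian; using these relations I would argue that the commutant of $\{A_n,B_n\}$ is closed under the adjoint operation $T\mapsto S_0 T^* S_0^{-1}$ (or an appropriate conjugate), so that after replacing $T$ by a suitable combination we may assume $T$ is self-adjoint with respect to the inner product given by $S_0 = \|P_0\|^2 = \langle I,I\rangle_W = \int_0^1 W(t)\,dt$. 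Concretely, it is cleaner to pass first to the equivalent weight $\widehat W = S_0^{-1/2}\,W\, S_0^{-1/2}$ (normalizing $\int_0^1 \widehat W = I$); by the easy direction's transformation rules this does not affect reducibility and replaces $T$ by a conjugate which now commutes with the new coefficients, and for that normalization the relations \eqref{AB} force the commutant to be $*$-closed, so we may take $T=T^*$, nonscalar. Then $T$ has a nontrivial spectral projection $E$ (nonscalar, since $T$ is nonscalar and self-adjoint), and $E$ is a polynomial in $T$, hence also commutes with every $A_n, B_n$. Choosing a unitary $U$ that diagonalizes $E$ into $I_k\oplus 0$, the conjugated coefficients $U^*A_nU, U^*B_nU$ all commute with $I_k\oplus 0$ and are therefore block-diagonal. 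By \cite[Theorem 4.5]{TZ16} (or by directly running the recurrence backwards: block-diagonal $A_n,B_n$ with $P_{-1}=0$, $P_0=I$ force every $P_n$ block-diagonal, hence $\widehat W$ — recovered from the $P_n$ and $S_n$ — block-diagonal), the weight $U^*\widehat WU$ is block-diagonal, i.e. $W$ reduces.

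The main obstacle is the converse direction, and within it the step that upgrades an arbitrary commuting matrix $T$ to a self-adjoint one and then extracts a genuine nonscalar spectral projection: one must be careful that after the normalization the commutant really is $*$-closed, and that a nonscalar $T$ yields a nonscalar projection (which fails only if $T$ is scalar, so this is automatic once self-adjointness is in hand). The rest is bookkeeping with the standard transformation laws of monic orthogonal polynomials and recurrence coefficients under similarity of weights, together with the cited structural result \cite[Theorem 4.5]{TZ16} that block-diagonality of the full sequence of polynomials is equivalent to block-diagonality of the weight. I would present the transformation laws as a short preliminary computation and then invoke them freely in both implications.
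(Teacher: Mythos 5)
Your argument is correct, but there is nothing in the paper to compare it with: the paper does not prove Theorem \ref{red} at all, it simply imports it from \cite[Theorem 4.5]{TZ16}. What you have written is, in effect, a self-contained proof of that cited result. The forward direction is routine bookkeeping with the transformation law $\tilde P_n=M^{-1}P_nM$ (which the paper records in its Remark in Section 2), and your converse is sound: after normalizing $S_0=I$ one has $S_n=A_1\cdots A_n$, so any $T$ in the commutant also commutes with every $S_n$; taking adjoints in $TS_n=S_nT$ and in $TB_n=B_nT$ together with $B_n^*=S_nB_nS_n^{-1}$ shows $T^*$ lies in the commutant, whence $T+T^*$ or $i(T-T^*)$ is a nonscalar self-adjoint element, its nontrivial spectral projection $E$ is a polynomial in it and hence also commutes, and conjugating by a unitary sending $E$ to $I_k\oplus 0$ makes all coefficients block-diagonal. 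Two small points deserve an explicit sentence if you write this up: (i) the $*$-closedness genuinely uses the normalization $S_0=I$ (without it $T$ need not commute with $S_0$, and the adjoint operation on the commutant is $T\mapsto S_0T^*S_0^{-1}$ rather than $T\mapsto T^*$); (ii) the final step, recovering block-diagonality of the weight from block-diagonality of all $P_n$ and $S_n$, needs the observation that the moments $\mu_n=\int_0^1 t^nW(t)\,dt$ satisfy $\mu_n=-\sum_{k<n}\mu_kP_n^k$ and hence are block-diagonal, and that a compactly supported integrable off-diagonal block all of whose moments vanish must vanish a.e. With those two remarks made explicit, your proof is complete and is essentially the standard argument behind the Tirao--Zurri\'an reducibility criterion.
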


\smallskip 
We say that a differential operator $D$ is \emph{symmetric} if $%
\langle DP,Q\rangle=\langle P,DQ\rangle$, for all $P,Q$ matrix-valued
polynomials. 

\medskip

\begin{remark}
Notice that if $\{P_n\}_{n\in\mathbb N_0}$ is the sequence of monic
orthogonal polynomials with respect to $W$, and $M$ is a nonsingular matrix,
then $\{M^{-1} P_n M\}_{n\in\mathbb N_0}$ is the sequence of monic
orthogonal polynomials with respect to $\tilde W=M^*W M$. In the same way, a differential operator $D$ is symmetric with respect to a weight $W$ if and
only if $\tilde D= M^{-1}DM$ is symmetric with respect to $\tilde W=M^{*}W M$.
\end{remark}

\medskip
\section{Necessary Conditions for $D$}\label{nec}

We are interested in finding all the pairs $(W,D)$ such that $W(t) $ is a $2\times2$-weight function defined on  $\left( 0,1\right) $ and 
\begin{equation}
D=
t(1-t)\frac{d^{2}}{dt^{2}}+\left( C-tU\right) \frac{d}{dt}-V, \quad \text{ with } U,V,C\in
 \mathbb{R}
^{2\times 2}  \label{OD},
\end{equation}%
is a $2\times2$ matrix-valued hypergeometric differential operator symmetric with respect to ${W}$.

For each such a weight $W$, there is a unique sequence of monic orthogonal polinomials 
$\{P_n\}_{n\in \mathbb N_0}$ in $\operatorname{Mat}_2(\mathbb{C})$.  
It is known (see \cite{GT07}) that if $\{P_n\}_{n\in\mathbb N_0}$ is the unique sequence of 
monic orthogonal polinomials, then every polynomial $P_n$ is an eigenfunction of  such an operator, i.e.,
$$DP_n=P_n \Lambda_n, \quad \forall n\in\mathbb N_0,$$ 
where $\Lambda_n$ is a $2\times2$-matrix for any $n\in\mathbb N_0$.

\

Our aim is to classify all the operators $D=D({C,U,V})$ of the form \eqref{OD} that 
are symmetric with respect to any $2\times2$-matrix weight $W$ and the corresponding eigenvalues   of the monic orthogonal polynomials  are diagonal matrices, 
\[
\Lambda_{n}=
\begin{pmatrix}
\lambda _{n} & 0 \\
0 & \mu _{n}
\end{pmatrix}
,\]
with no repetition among $\{\lambda_n,\mu_n\}_{n\in\mathbb N_0}$ (this ``no repetition'' means that $\lambda_j\ne\mu_k$ for any $j,k\in\mathbb N_0$ and that
$\lambda_j\ne\lambda_k$ and $\mu_j\ne\mu_k$ when  $j\ne k$). 

\medskip
Of course we are not paying attention to the operators for which $W$ reduces to scalar weights, since these are given by the well known classical Jacobi differential operators. In particular, we are not considering the operators $D$ for which $C,U,V$ are all diagonal matrices.

Having said this, we are assuming for the rest of this section the following hypothesis.

\begin{hyp}\label{h} Let $D$ denote the differential operator given by \eqref{OD}, symmetric with respect to a $2\times2$-irreducible weight $W$ defined on $(0,1)$ and $\{P_n\}_{n\in{\mathbb{N}_0}}$ will be the sequence of monic orthogonal polynomials  such that
\[DP_n=P_n
\Lambda_{n} =P_n
\begin{pmatrix}
\lambda _{n} & 0 \\
0 & \mu _{n}
\end{pmatrix}
,\]
with no repetition among $\{\lambda_n,\mu_n\}_{n\in\mathbb N_0}$.
\end{hyp}

To develop the classification mentioned above, we will find the conditions to be satisfied by the matrices $C,U,V$. 
We will prove that the matrices $U$ and $V$ are diagonal matrices (Proposition \ref{UVdiagonales}) and that $C$ can not be a triangular matrix
 (Proposition \ref{C}).  After that, we will prove that $U$ is a scalar multiple of the identity which can be expressed in terms of the entries of the matrices $C$ and $V$ (Proposition \ref{U} and Theorem \ref{UVC}). Finally, we will use an appropriate conjugation to reduce the number of parameters and give the final simplified expression of the operator $D$ (Theorem \ref{CUVAnBn}). 

The first results below are quite simple, while the last ones may be more complicated in the sense that some of their proofs are very technical and require much more complicated computations.

\begin{proposition}\label{UVdiagonales} Every eigenvalue $\Lambda
_{n}
$, for $n\in\mathbb N_0$, is of the form  $$\Lambda _{n}=-n\left(n-1\right) -nU-V.$$ In particular, the matrices $U$ and $V$ are diagonal. 
\end{proposition}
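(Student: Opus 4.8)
The plan is to compute the action of $D$ on the monic polynomial $P_n$ by comparing leading coefficients, exploiting that the eigenvalue equation $DP_n=P_n\Lambda_n$ must hold identically in $t$. Write $P_n(t)=t^n I + t^{n-1}X_{n} + (\text{lower order})$ for some matrix $X_n$. Applying the operator $D=t(1-t)\frac{d^2}{dt^2}+(C-tU)\frac{d}{dt}-V$ to $t^n I$, the top-degree contributions come from $-t\cdot t^n\cdot n(n-1)t^{-1}\cdot\ \text{(from }t(1-t)\partial^2)$, namely $-n(n-1)t^n$, from $-tU$ acting via $\frac{d}{dt}$ giving $-nU t^n$, and from $-V$ giving $-Vt^n$; all the $C\frac{d}{dt}$ and the $+t\partial^2$ terms drop the degree. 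Hence the coefficient of $t^n$ in $DP_n$ is $\bigl(-n(n-1)-nU-V\bigr)$, while the coefficient of $t^n$ in $P_n\Lambda_n$ is simply $\Lambda_n$. Matching these gives
\[
\Lambda_n=-n(n-1)I-nU-V,
\]
which is the claimed formula (with the identity matrix suppressed in the statement's notation).

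From this identity the diagonality of $U$ and $V$ is immediate: by Hypothesis~\ref{h} every $\Lambda_n$ is diagonal, so $-nU-V$ is diagonal for every $n\in\mathbb N_0$. Taking $n=0$ shows $V$ is diagonal; then taking $n=1$ (so that $-U-V$ is diagonal, hence $U$ is diagonal) finishes it. Equivalently, subtracting the $n=0$ relation from the $n=1$ relation isolates $U$ as a difference of two diagonal matrices. One should also note for free that the diagonal entries of $\Lambda_n$ are $\lambda_n=-n(n-1)-nu_1-v_1$ and $\mu_n=-n(n-1)-nu_2-v_2$, writing $U=\operatorname{diag}(u_1,u_2)$, $V=\operatorname{diag}(v_1,v_2)$; this will be used later when imposing the no-repetition condition.

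I expect the only mildly delicate point to be the careful bookkeeping of which terms in $D\,t^nI$ actually reach degree $n$: one must check that $t(1-t)\partial^2(t^nI)=n(n-1)t^{n-1}-n(n-1)t^n$ contributes $-n(n-1)t^n$ at top order, that $C\partial(t^nI)=nCt^{n-1}$ is strictly lower degree, and that the lower-order terms $t^{n-1}X_n+\cdots$ of $P_n$ cannot feed back into degree $t^n$ under $D$ (they cannot, since $D$ raises degree by at most zero — indeed $D$ maps degree-$k$ polynomials to degree-$\le k$ polynomials because the top symbol $t(1-t)\partial^2$ and $-tU\partial$ and $-V$ are degree-preserving while $t\partial^2$ and $C\partial$ are degree-lowering). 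Once that is observed, the comparison of the $t^n$-coefficients is forced and the proposition follows. No genuine obstacle arises; the argument is a short leading-coefficient computation together with the hypothesis that $\Lambda_n$ is diagonal.
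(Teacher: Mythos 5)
Your proof is correct and follows essentially the same route as the paper: compare the degree-$n$ coefficient in $DP_n=P_n\Lambda_n$ (using that $P_n$ is monic and that $D$ does not raise degree) to get $\Lambda_n=-n(n-1)I-nU-V$, then read off diagonality of $V$ from $n=0$ and of $U$ from $n=1$. The extra bookkeeping you supply about which terms of $D$ preserve degree is exactly the content the paper leaves implicit, so there is nothing to add.
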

\begin{proof}
From the hypothesis we have that for any $n\in\mathbb N_0$
\begin{equation*}\label{ED}
t(1-t)\frac{d^{2}}{dt^{2}}P_{n}\left( t\right) +\left( C-tU\right) \frac{d}{%
dt}P_{n}\left( t\right) -VP_{n}\left( t\right) =P_{n}\left( t\right) \Lambda
_{n}.
\end{equation*}%
Hence, by looking at the coefficient of degree $n$ we obtain
$$-n(n-1)-nU -V=\Lambda_{n}.$$
This proves the first statement. Considering the case $n=0$ we have that $V$ is diagonal, thus also $U$ is diagonal.
\end{proof}

\begin{proposition} \label{SnAn} The matrices
$S_n$   and $A_{n+1}$
 are  positive definite diagonal matrices, for all $n\in\mathbb N_0$.
\end{proposition}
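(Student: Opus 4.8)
The plan is to reduce everything to showing that each square norm $S_n$ is diagonal; once this is known, the assertion for $A_{n+1}$ is immediate from \eqref{AB}. Recall first that $S_n=\langle P_n,P_n\rangle_W=\int_0^1 P_n^*(t)W(t)P_n(t)\,dt$ is positive definite for every $n$: since $W$ is a positive-definite weight, $v^*S_nv=0$ forces $W^{1/2}P_nv\equiv 0$ a.e., hence the polynomial vector $P_nv$, whose leading term is $t^nv$, vanishes identically, so $v=0$. Thus the only real content is the diagonality of $S_n$.

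To obtain that, I would use the symmetry of $D$ together with the eigenvalue equation $DP_n=P_n\Lambda_n$. Taking $P=Q=P_n$ in $\langle DP,Q\rangle_W=\langle P,DQ\rangle_W$ gives $\Lambda_n^*S_n=S_n\Lambda_n$, because the constant matrix $\Lambda_n$ can be pulled out of the integral on each side. By Proposition \ref{UVdiagonales} the eigenvalue $\Lambda_n=\left(\begin{smallmatrix}\lambda_n&0\\0&\mu_n\end{smallmatrix}\right)$ is real and diagonal, so $\Lambda_n^*=\Lambda_n$ and therefore $S_n$ commutes with $\Lambda_n$. Comparing the off-diagonal entries of $\Lambda_n S_n$ and $S_n\Lambda_n$ yields $(\lambda_n-\mu_n)(S_n)_{12}=0$ and $(\mu_n-\lambda_n)(S_n)_{21}=0$; since the ``no repetition'' hypothesis gives in particular $\lambda_n\neq\mu_n$, we conclude $(S_n)_{12}=(S_n)_{21}=0$, i.e.\ $S_n$ is diagonal. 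Being positive definite and diagonal, its two diagonal entries are strictly positive.

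Finally, by \eqref{AB} we have $A_{n+1}=S_n^{-1}S_{n+1}$, a product of two positive-definite diagonal matrices, hence a diagonal matrix with entries $(S_{n+1})_{ii}/(S_n)_{ii}>0$; thus $A_{n+1}$ is a positive-definite diagonal matrix. I do not expect a genuine obstacle here: the argument is short, and the only point to handle with care is to invoke the correct consequence of the hypothesis, namely the distinctness $\lambda_n\neq\mu_n$ \emph{within} a single $\Lambda_n$ (which is what forces a matrix commuting with $\Lambda_n$ to be diagonal), rather than the distinctness of eigenvalues for different values of $n$.
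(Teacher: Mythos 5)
Your proof is correct and follows essentially the same route as the paper: positive definiteness of $S_n$ from the inner product, diagonality from the symmetry relation $\Lambda_n^*S_n=S_n\Lambda_n$ combined with $\lambda_n\neq\mu_n$, and then $A_{n+1}=S_n^{-1}S_{n+1}$ via \eqref{AB}. Your added detail correctly identifying that only the within-$n$ distinctness $\lambda_n\neq\mu_n$ is needed matches the paper's remark that $\Lambda_n$ is ``real, diagonal and non-scalar.''
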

\begin{proof}
The matrix $S_n$ is given by the square norm of $P_n$, hence it is positive definite for any $n\in\mathbb N_0$.

Since the operator $D$ is symmetric we have $\langle DP_{n},P_{n}\rangle
=\langle P_{n},DP_{n}\rangle $, which implies $\Lambda_n^{\ast }S_{n}=S_{n}\Lambda_n$ and
this, in turn, implies that $S_{n}$ is diagonal since every $\Lambda_n$ is real, diagonal and non-scalar.

Also, from \eqref{AB}, we have that $A_{n}=S_{n-1}^{-1}S_{n}$. Hence, $A_n$ is diagonal and definite positive for every $n\in\mathbb N$.
The proposition is proved.
\end{proof}

\begin{remark}\label{rem}
 Without loss of generality we can assume that the entry $(2,2)$ of $V$, $v_{22}$, is zero since the operator $D-v_{22}I$ and its eigenvalues $\Lambda_{n}-v_{22}I$ still satisfy the conditions given in Hypothesis \ref{h}.
 
  On the other hand, the diagonal entries of $V$ are different to each other  since  $V=-\Lambda_0$.   Hence, we can assume that $$V=\begin{pmatrix}
v & 0 \\
0 & 0%
\end{pmatrix}, \quad \text{ with }    v \ne0.$$
\end{remark}
 
Let us observe that in particular we obtain the following relation between the eigenvalues and the entries of the matrices $U=\left(\begin{smallmatrix}u_1&0\\0&u_2\end{smallmatrix}\right)$ and $V$
\begin{equation}\label{lambdan}
\lambda_n=-n(n-1+u_1)-v ,  \qquad \mu_n= -n(n-1+u_2).
\end{equation}
 This also implies that $\mu_0$ is zero and, from the hypothesis on the eigenvalues, no other $\mu_n$, for $1\le n$, nor  $\lambda_n$, for $0\le n$, is zero.

\

The following lemmas are needed for our explicit computations to obtain a better understanding of the matrices $C=\left(\begin{smallmatrix}c_{11}&c_{12}\\c_{21}&c_{22}\end{smallmatrix}\right)$, $U$ and $V$.

\smallskip

\begin{lemma}\label{33}
If we write $P_n(t)=\sum_{k=0}^nP_n^kt^k$, then we have that
$$
P_n^k=[D,\lambda_n]_k\left(\begin{matrix}
1&0\\0&0
\end{matrix}
\right) + [D,\mu_n]_k\left(\begin{matrix}
0&0\\0&1
\end{matrix}
\right),
$$
where $[D,a]_k$, for $a\in\mathbb R$, is defined recursively by $[D,a]_n=I$ and $$[D,a]_k=(a-\Lambda_k)^{-1}(k+1)(k+C)[D,a]_{k+1}, \quad \text{ for } 0\le k \le n-1.$$
\end{lemma}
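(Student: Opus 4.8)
The plan is to derive the stated recursion directly from the eigenvalue equation $DP_n = P_n\Lambda_n$ by comparing coefficients of powers of $t$, together with the observation from Proposition~\ref{UVdiagonales} that $U,V$ (hence every $\Lambda_n$) are diagonal. First I would write $P_n(t)=\sum_{k=0}^n P_n^k t^k$ with $P_n^n=I$ (monic) and substitute into
\[
t(1-t)P_n''+(C-tU)P_n'-VP_n=P_n\Lambda_n .
\]
Differentiating termwise, $P_n'=\sum_k k P_n^k t^{k-1}$ and $P_n''=\sum_k k(k-1)P_n^k t^{k-2}$, so the left side becomes a polynomial whose coefficient of $t^k$ I would collect: the term $t\,P_n''$ contributes $k(k-1)P_n^k$, the term $-t^2 P_n''$ contributes $-(k+1)kP_n^{k+1}\cdot(\text{shift})$ — more carefully, $-t^2 P_n'' = -\sum_k k(k-1)P_n^k t^{k}$ has coefficient $-k(k-1)P_n^k$ at $t^k$; wait, I need to be careful with the shifts, so let me instead organize by matching: the coefficient of $t^k$ on the left is
\[
(k+1)k\,P_n^{k+1} \;+\; (k+1)\,C\,P_n^{k+1} \;-\; k\,U\,P_n^{k}\cdot 0 \;-\cdots
\]
Rather than risk an error in this sketch, the point is that after collecting, the $t^k$-coefficient identity reads
\[
(k+1)(k+C)\,P_n^{k+1} \;=\; \bigl(\Lambda_k - \Lambda_n\bigr)\,P_n^k \quad\text{on the left-acting side,}
\]
but since $\Lambda_n$ acts on the right of $P_n$, the correct bookkeeping gives $P_n^k\Lambda_n$ on the right-hand side and $-k(k-1)P_n^k - kU P_n^k - V P_n^k = P_n^k(-k(k-1)-kU-V)$ only if these commute, which they do because $U,V$ are diagonal — here is where diagonality is essential.

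The cleaner route, which I would actually carry out, is to exploit that $\Lambda_k=-k(k-1)-kU-V$ is diagonal, so $P_n^k$ splits along the two standard idempotents $E_1=\left(\begin{smallmatrix}1&0\\0&0\end{smallmatrix}\right)$ and $E_2=\left(\begin{smallmatrix}0&0\\0&1\end{smallmatrix}\right)$. Writing $P_n^k = a_k E_1 + b_k E_2 + (\text{off-diagonal part})$ and noting that the recursion couples each column of $P_n^k$ to the same column of $P_n^{k+1}$ via left multiplication by $(k+1)(k+C)$ and by the scalar eigenvalue ($\lambda_n$ for the first column, $\mu_n$ for the second), one sees the first column of $P_n$ behaves like a ``$\lambda_n$-eigenvector chain'' and the second like a ``$\mu_n$-chain''. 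Concretely, matching $t^k$-coefficients and moving the diagonal zeroth-order terms to the right gives
\[
(k+1)(k+C)\,P_n^{k+1} \;=\; P_n^k\,\Lambda_n - \bigl(-k(k-1)-kU-V\bigr)P_n^k^{\mathsf{(?)}}
\]
— I will not belabor the exact transposition in this sketch — and using $-k(k-1)-kU-V=\Lambda_k$ this collapses to $(k+1)(k+C)P_n^{k+1} = P_n^k\Lambda_n - \Lambda_k P_n^k$. Reading this column by column: for the first column $v^{(1)}_k$ of $P_n^k$ we get $(k+1)(k+C)v^{(1)}_{k+1} = (\lambda_n - \Lambda_k)v^{(1)}_k$, i.e. $v^{(1)}_k = (\lambda_n-\Lambda_k)^{-1}(k+1)(k+C)v^{(1)}_{k+1}$, which is exactly $[D,\lambda_n]_k$ acting on the top unit vector; symmetrically the second column gives the $[D,\mu_n]_k$ chain. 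Since $P_n^n=I$ has first column $\binom{1}{0}$ and second column $\binom{0}{1}$, the base case $[D,a]_n=I$ matches, and induction downward from $k=n$ to $k=0$ yields $P_n^k = [D,\lambda_n]_k E_1 + [D,\mu_n]_k E_2$ as claimed. I would also remark that $(\lambda_n-\Lambda_k)$ is invertible precisely by the no-repetition hypothesis (Hypothesis~\ref{h}), since its diagonal entries are $\lambda_n-\lambda_k$ and $\lambda_n-\mu_k$, both nonzero; likewise for $\mu_n$.

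The main obstacle — really the only delicate point — is the correct coefficient bookkeeping when passing the right-multiplication by $\Lambda_n$ and the left-multiplication by the diagonal matrices $k(k-1)+kU+V$ past the matrix coefficients $P_n^k$: one must be careful that ``$(k+C)$'' multiplies $P_n^{k+1}$ \emph{on the left}, coming from $(C-tU)\frac{d}{dt}$ and $t(1-t)\frac{d^2}{dt^2}$, whereas $\Lambda_n$ multiplies \emph{on the right}. This is exactly why the hypothesis that $U$ and $V$ are diagonal (Proposition~\ref{UVdiagonales}) is used twice: once to get $-k(k-1)-kU-V = \Lambda_k$ as a genuine matrix identity, and once to let $\Lambda_k$ commute appropriately so that the two columns decouple into independent scalar-eigenvalue chains. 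Everything else is a routine finite downward induction on $k$ with an easy base case, so the proof is short modulo keeping the left/right multiplications straight.
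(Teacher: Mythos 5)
Your argument is correct and is essentially the paper's proof: both match the $t^k$-coefficients in $DP_n=P_n\Lambda_n$ to get $(k+1)(k+C)P_n^{k+1}=P_n^k\Lambda_n-\Lambda_kP_n^k$, decouple the two columns using that $\Lambda_n$ is diagonal (the paper simply works with one column of $P_n$ at a time from the start), and run the downward recursion from $P_n^n=I$, with invertibility of $a-\Lambda_k$ supplied by the no-repetition hypothesis. One small correction of emphasis: no commutation is needed to identify the left-multiplying term $-k(k-1)-kU-V$ with $\Lambda_k$ (that is the matrix identity of Proposition \ref{UVdiagonales}); the only place diagonality genuinely enters the decoupling is that right-multiplication by the diagonal $\Lambda_n$ scales each column by a scalar.
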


\begin{proof}
If, for a fixed $n$, we denote by $$
F\left( t\right)=\sum_{k= 0}^n{t^{k}}F_k$$
the first 
column of $P_n(t)$, then
from the hypothesis we have that $DF=\lambda_nF$. Since $$D(t^k)=t(1-t)k(1-k)t^{k-2}+(C-tU)kt^{k-1}-Vt^k,$$
we have
$$\sum_{k=0}^n 
\left(\left(-k(1-k)-Uk-V\right)t^k+\left(k(1-k)+Ck\right)t^{k-1}\right)F_k=\sum_{k=0}^n \lambda_n t^k F_k.
 $$
Hence, $$
\left(-k(1-k)-Uk-V\right)F_k+\left((k+1)k+C(k+1)\right)F_{k+1}=\lambda_nF_k,
 $$
for $0\le k \le n-1.$
Since $\Lambda_k=-k(1-k)-Uk-V$ we have that 
$$F_k=-(\Lambda_k-\lambda_n)^{-1}(k+1)(k+C)F_{k+1}.$$
Notice that we used that $(\Lambda_k-\lambda_n)$ is non-singular since there is no repetition among $\{\lambda_n,\mu_n\}_{n\in{\mathbb{N}_0}}$.

In a completely analogous way it is proved the corresponding result for the second column of $P_n$, finishing the proof.
\end{proof}
 
\medskip
 
\begin{corollary} \label{Pnn-1} 
If we write $P_n(t)=\sum_{k=0}^nP_n^kt^k$, then we have the following explicit expressions.
\begin{align*}
P_n^{n-1}=&n \begin{pmatrix} 
\frac{c_{11}+n-1}{\lambda_{n}-\lambda_{n-1}} & \frac{c_{12}}{\mu_{n}-\lambda_{n-1}}\\
\frac{c_{21}}{\lambda_{n}-\mu_{n-1}} & \frac{c_{22}+n-1}{\mu_{n}-\mu_{n-1}} 
\end{pmatrix},
\\
 P_{n}^{n-2}=& 
n (n-1)\begin{pmatrix} 
\frac{(c_{11}+n-2)(c_{11}+n-1)}{(\lambda_{n-2}-\lambda_n)(\lambda_{n-1}-\lambda_n)} +\frac{c_{12} c_{21}}{(\lambda_{n-2}-\lambda_n)(\mu_{n-1}-\lambda_n)} & 
\frac{c_{12}(c_{11}+n-2) }{(\lambda_{n-2}-\mu_n)(\lambda_{n-1}-\mu_n)}+\frac{c_{12}(c_{22}+n-1) }{(\lambda_{n-2}-\mu_n)(\mu_{n-1}-\mu_n)} \\
\frac{c_{21}(c_{11}+n-1) }{(\mu_{n-2}-\lambda_n)(\lambda_{n-1}-\lambda_n)}+\frac{c_{21}(c_{22}+n-2) }{(\mu_{n-2}-\lambda_n)(\mu_{n-1}-\lambda_n)} & 
\frac{(c_{22}+n-2)(c_{22}+n-1)}{(\mu_{n-2}-\mu_n)(\mu_{n-1}-\mu_n)} +\frac{c_{12} c_{21}}{(\mu_{n-2}-\mu_n)(\lambda_{n-1}-\mu_n)}
\end{pmatrix}.
\end{align*}
\end{corollary}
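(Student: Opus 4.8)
The plan is to apply Lemma \ref{33} with $k=n-1$ and $k=n-2$ and then multiply out the resulting products of explicit $2\times2$ matrices. Recall that Lemma \ref{33} gives
\[
P_n^k=[D,\lambda_n]_k\begin{pmatrix}1&0\\0&0\end{pmatrix}+[D,\mu_n]_k\begin{pmatrix}0&0\\0&1\end{pmatrix},
\]
so the first column of $P_n^k$ equals the first column of $[D,\lambda_n]_k$ and the second column of $P_n^k$ equals the second column of $[D,\mu_n]_k$. Hence it suffices to compute, for $a\in\{\lambda_n,\mu_n\}$, the matrices $[D,a]_{n-1}$ and $[D,a]_{n-2}$ from the recursion $[D,a]_n=I$, $[D,a]_k=(a-\Lambda_k)^{-1}(k+1)(k+C)[D,a]_{k+1}$; note that each $(a-\Lambda_k)$ occurring here is invertible by the no-repetition hypothesis on $\{\lambda_n,\mu_n\}$.

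First I would treat the case $k=n-1$: one step of the recursion gives $[D,a]_{n-1}=n\,(a-\Lambda_{n-1})^{-1}\bigl(C+(n-1)I\bigr)$. Since $\Lambda_{n-1}=\operatorname{diag}(\lambda_{n-1},\mu_{n-1})$, the factor $(a-\Lambda_{n-1})^{-1}$ is $\operatorname{diag}\bigl((a-\lambda_{n-1})^{-1},(a-\mu_{n-1})^{-1}\bigr)$, and left-multiplying $C+(n-1)I=\left(\begin{smallmatrix}c_{11}+n-1&c_{12}\\c_{21}&c_{22}+n-1\end{smallmatrix}\right)$ by it merely rescales its rows. Extracting the first column of $[D,\lambda_n]_{n-1}$ and the second column of $[D,\mu_n]_{n-1}$ and assembling them yields precisely the stated expression for $P_n^{n-1}$.

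Then, for $k=n-2$, two steps of the recursion give
\[
[D,a]_{n-2}=n(n-1)\,(a-\Lambda_{n-2})^{-1}\bigl(C+(n-2)I\bigr)(a-\Lambda_{n-1})^{-1}\bigl(C+(n-1)I\bigr),
\]
and one expands this product of four explicit $2\times2$ matrices (two diagonal inverses and two scalar shifts of $C$), then reads off the first column at $a=\lambda_n$ and the second column at $a=\mu_n$. The only place needing care is the bookkeeping in this four-fold product: keeping $C+(n-2)I$ to the left of the middle diagonal inverse and $C+(n-1)I$ to its right, and observing that the denominators produced naturally, such as $(\lambda_n-\lambda_{n-2})(\lambda_n-\lambda_{n-1})$, coincide with those written in the statement, e.g. $(\lambda_{n-2}-\lambda_n)(\lambda_{n-1}-\lambda_n)$, since two sign changes cancel. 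Carrying out this expansion reproduces the displayed formula for $P_n^{n-2}$, and this is the only step with any genuine computational content.
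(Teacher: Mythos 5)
Your proposal is correct and follows essentially the same route as the paper's proof: both unwind the recursion of Lemma \ref{33} one step for $P_n^{n-1}$ and two steps for $P_n^{n-2}$, then expand the resulting products of the diagonal inverses $(a-\Lambda_k)^{-1}$ with the shifts $C+kI$ and read off the appropriate columns for $a=\lambda_n$ and $a=\mu_n$. Your remark that the sign discrepancies in the denominators cancel in pairs correctly accounts for the form of the stated expressions.
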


\smallskip
\begin{proof}
From Lemma \ref{33} we have 
\begin{align*} 
P_n^{n-1}& =(\lambda_n-\Lambda_{n-1})^{-1} \, n \,  (n-1+C)
\left(\begin{smallmatrix}
1&0\\0&0
\end{smallmatrix}
\right) 
+ (\mu_n-\Lambda_{n-1})^{-1} \, n\,  (n-1+C)
\left(\begin{smallmatrix}
0&0\\0&1
\end{smallmatrix}
\right) \\
&= n \left(\begin{smallmatrix} 
\frac{c_{11}+n-1}{\lambda_{n}-\lambda_{n-1}} & 0\\
\frac{c_{21}}{\lambda_{n}-\mu_{n-1}} & 0
\end{smallmatrix} \right) 
+  
n\left(\begin{smallmatrix} 
0 & \frac{c_{12}}{\mu_{n}-\lambda_{n-1}}\\
 0& \frac{c_{22}+n-1}{\mu_{n}-\mu_{n-1}} 
\end{smallmatrix} \right)
 \intertext{and} 
P_n^{n-2} &= (\lambda_n-\Lambda_{n-2})^{-1}\,(n-1) \,(n-2+C)\,n
 \left(\begin{smallmatrix} 
\frac{c_{11}+n-1}{\lambda_{n}-\lambda_{n-1}} & 0\\
\frac{c_{21}}{\lambda_{n}-\mu_{n-1}} & 0
\end{smallmatrix} \right) \\ 
&+ (\mu_n-\Lambda_{n-2})^{-1}\,(n-1)\, (n-2+C)\,
n \left(\begin{smallmatrix} 
0 & \frac{c_{12}}{\mu_{n}-\lambda_{n-1}}\\
 0& \frac{c_{22}+n-1}{\mu_{n}-\mu_{n-1}} 
\end{smallmatrix} \right).
\end{align*}
After straightforward computations, we obtain the statement of the corollary.
\end{proof}

\

\begin{lemma}\label{34}
If we write $P_n(t)=\sum_{k=0}^nP_n^kt^k$, then the coefficients of the three term recursion relation 
$$
tP_{n}=P_{n+1}+P_{n}B_{n}+P_{n-1}A_{n}, \quad n\in\mathbb N_0,
$$
satisfied by $\{P\left( t\right) \}_{n\in{\mathbb{N}_0}}$ (with the convention $P_{-1}=0$ and $A_0=I$), are explicitly given by 
\begin{align*}
B_n=&P_{n}^{n-1}-P_{n+1}^n,& & \text{ for } n\in\mathbb N_0,
\\
A_n=&P_{n}^{n-2}-P_{n+1}^{n-1}-P_{n}^{n-1}(P_{n}^{n-1}-P_{n+1}^n),& & \text{ for } n\in\mathbb N.
\end{align*}
\end{lemma}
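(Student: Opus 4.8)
The plan is to prove both identities at once by substituting the expansions $P_n(t)=\sum_{k=0}^n P_n^k t^k$ (recall that $P_n^n=I$, since the $P_n$ are monic) into the three term recurrence relation \eqref{ttrr} and then comparing coefficients of equal powers of $t$. Writing the left hand side as $tP_n(t)=\sum_{k=0}^{n}P_n^k t^{k+1}=\sum_{j=1}^{n+1}P_n^{j-1}t^j$ and the right hand side as $P_{n+1}(t)+P_n(t)B_n+P_{n-1}(t)A_n=\sum_{k=0}^{n+1}P_{n+1}^k t^k+\sum_{k=0}^n P_n^k B_n\,t^k+\sum_{k=0}^{n-1}P_{n-1}^k A_n\,t^k$, both sides are matrix polynomials of degree $n+1$, and equating them coefficient by coefficient gives a system of matrix equations, of which only the top three are needed. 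Throughout I would adopt the convention $P_m^k=0$ whenever $k<0$ or $k>m$, so that the stated formulas are valid uniformly in the indicated ranges of $n$.

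First I would read off the coefficient of $t^{n+1}$: on the left it is $P_n^n=I$ and on the right it is $P_{n+1}^{n+1}=I$, so this equation is an identity and carries no information — it merely records that $A_n$ and $B_n$ do not affect the leading term.

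Next, the coefficient of $t^n$: on the left it is $P_n^{n-1}$, while on the right, since $P_{n-1}$ contributes nothing in degree $n$, it is $P_{n+1}^n+P_n^n B_n=P_{n+1}^n+B_n$ using $P_n^n=I$. This yields $B_n=P_n^{n-1}-P_{n+1}^n$ for every $n\in\mathbb N_0$; for $n=0$ the convention $P_0^{-1}=0$ reproduces $B_0=-P_1^0$, which is exactly what \eqref{ttrr} gives at $n=0$.

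Finally, the coefficient of $t^{n-1}$ for $n\in\mathbb N$: on the left it is $P_n^{n-2}$, and on the right it is $P_{n+1}^{n-1}+P_n^{n-1}B_n+P_{n-1}^{n-1}A_n=P_{n+1}^{n-1}+P_n^{n-1}B_n+A_n$, using $P_{n-1}^{n-1}=I$. Solving for $A_n$ and substituting the formula for $B_n$ just obtained gives $A_n=P_n^{n-2}-P_{n+1}^{n-1}-P_n^{n-1}\bigl(P_n^{n-1}-P_{n+1}^n\bigr)$, as claimed. There is no genuine obstacle here: the argument is a direct matching of coefficients of powers of $t$, and the only points requiring a little care are keeping the index ranges straight (so that the relevant $P_m^k$ vanish at the boundary) and invoking monicity $P_m^m=I$ at the two places where it is used.
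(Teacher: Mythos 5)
Your proof is correct and follows the same route as the paper: equate the coefficients of $t^{n}$ and $t^{n-1}$ in the three term recurrence, use monicity $P_n^n=I$, and substitute the resulting expression for $B_n$ into the equation for $A_n$. You simply spell out the bookkeeping (index ranges, the vacuous $t^{n+1}$ equation) that the paper leaves implicit.
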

\begin{proof}
If we consider the coefficients of order $n$ and $n-1$ in the three term recurrence relation then we have
\begin{equation*}
B_n=P_{n}^{n-1}-P_{n+1}^n \quad \text{and} \quad A_n=P_{n}^{n-2}-P_{n+1}^{n-1}-P_{n}^{n-1}B_n,
\end{equation*}
respectively. The lemma follows.
\end{proof}

\begin{proposition}\label{C}
If the matrix $C$ in \eqref{OD} is triangular then it is diagonal.
\end{proposition}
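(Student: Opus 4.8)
The plan is to argue by contradiction: suppose $C$ is triangular but not diagonal, so exactly one of $c_{12},c_{21}$ is nonzero. By symmetry of the two cases (one can pass from the upper- to the lower-triangular situation by conjugating $D$ with the flip $\left(\begin{smallmatrix}0&1\\1&0\end{smallmatrix}\right)$, which swaps the roles of $\lambda_n$ and $\mu_n$ and of $c_{12}$ and $c_{21}$ while keeping $U,V$ diagonal), it suffices to treat, say, the case $c_{21}=0$, $c_{12}\ne 0$. The idea is then to feed this assumption into the explicit formulas of Corollary \ref{Pnn-1} and Lemma \ref{34} and to exploit Proposition \ref{SnAn}, which forces all the $A_n$ (and $S_n$) to be diagonal. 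Since $A_n$ is built from $P_n^{n-2}$, $P_{n+1}^{n-1}$ and $P_n^{n-1}B_n$, and $B_n=P_n^{n-1}-P_{n+1}^n$, the off-diagonal entries of these polynomial coefficients must conspire to cancel.

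First I would specialize Corollary \ref{Pnn-1} and Lemma \ref{34} under $c_{21}=0$. With $c_{21}=0$ the $(2,1)$-entry of $P_n^{n-1}$ vanishes, so $P_n^{n-1}$ is upper triangular with diagonal entries $n\frac{c_{11}+n-1}{\lambda_n-\lambda_{n-1}}$ and $n\frac{c_{22}+n-1}{\mu_n-\mu_{n-1}}$ and $(1,2)$-entry $n\frac{c_{12}}{\mu_n-\lambda_{n-1}}$; likewise $P_n^{n-2}$ becomes upper triangular. Consequently every $B_n$ is upper triangular, and the $(2,1)$-entry of $A_n$ is automatically zero, so no information comes from there. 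The real constraint is the vanishing of the $(1,2)$-entry of $A_n$ for all $n\in\mathbb N$. Writing out
\[
0=(A_n)_{12}=(P_n^{n-2})_{12}-(P_{n+1}^{n-1})_{12}-(P_n^{n-1})_{11}(B_n)_{12}-(P_n^{n-1})_{12}(B_n)_{22},
\]
and substituting $(B_n)_{12}=(P_n^{n-1})_{12}-(P_{n+1}^n)_{12}$, $(B_n)_{22}=(P_n^{n-1})_{22}-(P_{n+1}^n)_{22}$, together with the closed forms from Corollary \ref{Pnn-1} and the values $\lambda_n=-n(n-1+u_1)-v$, $\mu_n=-n(n-1+u_2)$ from \eqref{lambdan}, yields, after clearing denominators, a rational identity in $n$ that must hold for every $n\in\mathbb N$. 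Since $c_{12}\ne 0$ we may divide it out; what remains is a polynomial identity in $n$ whose coefficients are polynomials in the finitely many parameters $c_{11},c_{22},u_1,u_2,v$.

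The main obstacle, and the step I expect to be genuinely computational, is extracting from this polynomial-in-$n$ identity the conclusion that it cannot hold. I would organize it as follows: clear all denominators (the factors $\lambda_n-\lambda_{n-1}$, $\mu_n-\mu_{n-1}$, $\mu_n-\lambda_{n-1}$, $\mu_{n+1}-\lambda_n$, etc., are linear in $n$ and nonvanishing by the no-repetition hypothesis), obtain a polynomial $Q(n)\equiv 0$, and then read off its leading coefficients in $n$. The highest-degree terms should force a relation among $u_1,u_2$ (for instance $u_1=u_2$, which collapses the eigenvalue denominators), and then the next coefficients, reconsidered under that relation, force $c_{12}=0$ or an equality among the $\lambda$'s and $\mu$'s — either way contradicting our standing assumptions (irreducibility via Theorem \ref{red}, or no repetition, or $c_{12}\ne 0$). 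An alternative, cleaner route that avoids the worst of the algebra is to use the reducibility criterion directly: if $C$ is triangular with $c_{21}=0$ and $U,V$ are diagonal, I would try to show that the constant matrix $E=\left(\begin{smallmatrix}1&0\\0&0\end{smallmatrix}\right)$ (or a suitable constant upper-triangular nonscalar matrix) commutes with every $A_n$ and $B_n$ — this is immediate once one knows all $A_n$ are diagonal and all $B_n$ are upper triangular with a specific relation between entries — and conclude by Theorem \ref{red} that $W$ reduces, contradicting irreducibility in Hypothesis \ref{h}. I would present the argument via whichever of these two the computations make shortest, sketching the cancellation pattern and leaving the bulk of the symbolic manipulation to the reader as the paper's introduction anticipates.
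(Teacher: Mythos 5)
Your setup (reduce to the upper-triangular case $c_{21}=0$, $c_{12}\neq 0$; observe that then every $P_n^{n-1}$, $P_n^{n-2}$, hence every $B_n$, is upper triangular; invoke Proposition \ref{SnAn}) is fine, but the argument you build on it has a genuine gap, and you walk past the one-line conclusion that is already available from your own ingredients. Since $S_n$ is positive-definite and \emph{diagonal} and, by \eqref{AB}, $S_nB_n$ is hermitian, for an upper-triangular $B_n$ one gets $(S_n)_{11}(B_n)_{12}=\overline{(S_n)_{22}(B_n)_{21}}=0$, hence $(B_n)_{12}=0$; but $(B_0)_{12}=-c_{12}/(\mu_1-\lambda_0)\neq 0$, a contradiction. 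This is essentially the paper's proof (the paper derives $C=-VB_0-B_0\Lambda_1$ from $tI=P_1+B_0$ to see $(B_0)_{21}=0$ and $(B_0)_{12}\neq0$, then uses hermiticity of $S_0B_0$). You cite Proposition \ref{SnAn} only for ``$A_n$ diagonal'' and never use the hermiticity of $S_nB_n$, which is the key constraint here.

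Instead, your main route rests on the unverified claim that the identities $(A_n)_{12}=0$ for all $n$ force a contradiction; you only assert that the leading coefficients ``should'' force $u_1=u_2$ and then $c_{12}=0$ or an eigenvalue collision. That computation is not carried out, and it is not a priori clear that this particular subfamily of necessary conditions is inconsistent by itself (it might be satisfiable for some parameters, with the contradiction only arriving from the $S_nB_n$ condition you omitted). Your ``cleaner alternative'' via Theorem \ref{red} is actually incorrect as stated: a nonscalar diagonal matrix such as $E=\left(\begin{smallmatrix}1&0\\0&0\end{smallmatrix}\right)$ commutes with an upper-triangular $B_n$ if and only if $(B_n)_{12}=0$, and a matrix $\left(\begin{smallmatrix}1&x\\0&0\end{smallmatrix}\right)$ with $x\neq0$ commutes with a diagonal $A_n$ only if $A_n$ is scalar; so in the situation you are in (where $(B_0)_{12}\neq0$) no such commuting nonscalar matrix exists, and reducibility cannot be concluded this way.
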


\begin{proof}  
Let us assume that $C$ is upper triangular.  The sequence
of matrix-valued orthogonal monic polynomials ${P_{n}}$  satisfy a three-term
recursion relation of the form
\[
tP_{n}=P_{n+1}+P_{n}B_{n}+P_{n-1}A_{n}.
\] 
In particular for $n=0$ we have
\[
t\operatorname{I}=P_{1}+B_{0}
\]
Thus, we have
$ DP_{1}=(C-tU)-V(t\operatorname{I}-B_{0})=P_{1}\Lambda _{1}, $ 
and 
therefore  $C=-VB_{0}-B_{0}\Lambda _{1}$. By looking at the entries $(1,2)$ and $(2,1)$ we have 
$$\left\{\begin{aligned} c_{12}&=-v(B_0)_{12}-(B_0)_{12} \, \mu_1 ,
\\
0&=-(B_0)_{21} \, \lambda_1. 
\end{aligned}\right.$$
 From here we obtain that $(B_{0})_{12}\neq 0$ because the matrix $C$ is not diagonal, i.e. $c_{12}\neq 0$ and $(B_{0})_{21}=0$, because 
   $\lambda _{1}\neq 0=  \mu_0$. 
 On the other hand, from \eqref{AB}, we have that  $S_0B_0$ is a hermitian matrix and $S_{0}=\Vert P_{0}\Vert ^{2}$ is a definite positive diagonal matrix (see Proposition \ref{SnAn}). 
Hence we have a contradiction. 
 
%

For $C$ a lower triangular matrix the proof is completely analogous.
\end{proof}

 \
 
Now, thanks to Proposition \ref{C}, we can assume that the entries $(2,1)$ and $(1,2)$ of $C$ are not zero. Otherwise all the matrices $C$, $U$ and $V$ would be diagonal, making $D$ a diagonal differential operator, which reduces to two (scalar) classical Jacobi differential operators.

\begin{proposition}\label{U}
The matrix $U$ is a scalar multiple of the identity, i.e., $U=uI$ with $u\in\mathbb R$.
\end{proposition}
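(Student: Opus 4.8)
The plan is to compare low-order coefficients coming from the three-term recurrence relation, using the explicit formulas in Corollary \ref{Pnn-1} and Lemma \ref{34} together with the constraint \eqref{AB} that $S_n B_n$ is Hermitian, where $S_n$ is a positive-definite \emph{diagonal} matrix (Proposition \ref{SnAn}). Since, after Proposition \ref{C}, we may assume $c_{12}\neq 0$ and $c_{21}\neq 0$, the off-diagonal entries of the various $P_n^{n-1}$ are nonzero, and the Hermitian condition on $S_n B_n$ forces a precise relationship between the $(1,2)$ and $(2,1)$ entries of $B_n = P_n^{n-1}-P_{n+1}^n$. Writing $S_n = \mathrm{diag}(s_n^{(1)}, s_n^{(2)})$, the condition $(S_nB_n)_{12} = \overline{(S_nB_n)_{21}}$ becomes $s_n^{(1)} (B_n)_{12} = s_n^{(2)} (B_n)_{21}$ (all entries are real here), i.e.
\[
\frac{(B_n)_{12}}{(B_n)_{21}} = \frac{s_n^{(2)}}{s_n^{(1)}} > 0 .
\]
So first I would extract $(B_n)_{12}$ and $(B_n)_{21}$ from Corollary \ref{Pnn-1}, using \eqref{lambdan} to write the eigenvalue differences $\lambda_n - \lambda_{n-1}$, $\mu_n-\mu_{n-1}$, $\mu_n - \lambda_{n-1}$, $\lambda_n-\mu_{n-1}$ explicitly in terms of $u_1, u_2, v, n$.

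Next I would exploit the fact that the ratio above must be \emph{positive for every} $n\in\mathbb N_0$, and in particular that its behaviour as $n\to\infty$ is controlled. From \eqref{lambdan}, $\lambda_n - \mu_{n-1} = -n(n-1+u_1) - v + (n-1)(n-2+u_2)$, which is $(u_2 - u_1 - 2)n + O(1)$, while $\mu_n - \lambda_{n-1} = -n(n-1+u_2) + (n-1)(n-2+u_1) + v = (u_1 - u_2 - 2)n + O(1)$; and $\lambda_n - \lambda_{n-1} = -2n + O(1)$, $\mu_n-\mu_{n-1} = -2n+O(1)$. Then from Corollary \ref{Pnn-1},
\[
(B_n)_{12} = (P_n^{n-1})_{12} - (P_{n+1}^n)_{12} = \frac{n\,c_{12}}{\mu_n-\lambda_{n-1}} - \frac{(n+1)c_{12}}{\mu_{n+1}-\lambda_n},
\qquad
(B_n)_{21} = \frac{n\,c_{21}}{\lambda_n-\mu_{n-1}} - \frac{(n+1)c_{21}}{\lambda_{n+1}-\mu_n}.
\]
The key observation is that if $u_1 \neq u_2$, then one of $\mu_n - \lambda_{n-1}$, $\lambda_n - \mu_{n-1}$ grows like a positive multiple of $n$ and the other like a negative multiple of $n$ (their leading coefficients are $\pm(u_1-u_2)\mp 2$ — I would need to check the signs carefully, but the point is the two are genuinely asymmetric unless $u_1=u_2$), so the sign of the ratio $(B_n)_{12}/(B_n)_{21}$ will eventually be negative for large $n$ (or the ratio will fail to be positive), contradicting positivity of $s_n^{(2)}/s_n^{(1)}$. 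Hence $u_1 = u_2$, i.e.\ $U = uI$.

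The main obstacle I anticipate is making the asymptotic sign analysis airtight: I need to rule out that accidental cancellations in the differences $(P_n^{n-1})_{ij} - (P_{n+1}^n)_{ij}$ conspire to keep the ratio positive, and I must handle the borderline cases where leading coefficients vanish (e.g.\ $u_1 - u_2 = \pm 2$, or $u_1 - u_2 = 0$ which is exactly the desired conclusion). A cleaner route, which I would pursue in parallel, is to avoid asymptotics entirely: combine the Hermitian condition for $S_0 B_0$ and $S_1 B_1$ (so only $P_1, P_2, P_3$ enter, all given explicitly by Corollary \ref{Pnn-1}) with the relation $A_{n+1} = S_n^{-1}S_{n+1}$ from \eqref{AB}, which forces $A_n$ to be diagonal (Proposition \ref{SnAn}) — and then read off from Lemma \ref{34} that the $(1,2)$ and $(2,1)$ entries of $A_n = P_n^{n-2} - P_{n+1}^{n-1} - P_n^{n-1}B_n$ must vanish for all $n$. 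Imposing both families of equations (Hermitian $S_nB_n$, diagonal $A_n$) and clearing denominators via \eqref{lambdan} should yield polynomial identities in $n$ whose coefficients, once set to zero, force $u_1 = u_2$; this reduces the proof to a finite verification that the referenced technical computation carries out. I expect the bookkeeping in clearing the many denominators to be the tedious part, but conceptually the positivity/diagonality constraints leave no room for $u_1 \neq u_2$.
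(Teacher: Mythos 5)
Your primary route (the asymptotic sign analysis of $(B_n)_{12}/(B_n)_{21}$) does not work. The leading coefficients of $\mu_n-\lambda_{n-1}$ and $\lambda_n-\mu_{n-1}$ are $u_1-u_2-2$ and $u_2-u_1-2$, and these have opposite signs only when $|u_1-u_2|>2$; for $0<|u_1-u_2|<2$ both denominators still grow like negative multiples of $n$, exactly as in the case $u_1=u_2$, so no sign asymmetry appears. Worse, even when $|u_1-u_2|>2$ the argument fails: in $(B_n)_{12}=c_{12}\bigl(\tfrac{n}{\mu_n-\lambda_{n-1}}-\tfrac{n+1}{\mu_{n+1}-\lambda_n}\bigr)$ the two terms have the \emph{same} limit $1/(u_1-u_2-2)$, so the leading terms cancel, $(B_n)_{12}=O(1/n^2)$, and a short expansion shows $(B_n)_{12}/(B_n)_{21}$ tends to the constant $\frac{c_{12}(2-u_1+v)(u_2-u_1-2)^2}{c_{21}(2-u_2-v)(u_1-u_2-2)^2}$, whose sign depends on the parameters and need not be negative. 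So positivity of $s_n^{(2)}/s_n^{(1)}$ yields at best an inequality among the parameters, not a contradiction.

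Your fallback route is essentially the paper's strategy (impose diagonality of $A_n$ via Lemma \ref{34} and Corollary \ref{Pnn-1}, plus Hermitianity of $S_nB_n$), but as written it is a plan rather than a proof, and it oversells what the algebra alone delivers. In the paper, the vanishing of $(A_1)_{12}$ gives a single polynomial relation \eqref{v}, and the vanishing of the off-diagonal entries of $A_2$ factors into products of expressions in $u_1,u_2,c_{11},c_{22},v$; setting coefficients to zero does \emph{not} directly force $u_1=u_2$ but instead produces a disjunction of parameter branches. Each branch is then eliminated by invoking hypotheses you do not mention: the no-repetition condition on $\{\lambda_n,\mu_n\}$ (several branches force $\lambda_0=\mu_1$ or $\lambda_1=\mu_0$) and the irreducibility of $W$ via Theorem \ref{red} (in the branch $u_1c_{22}=u_2c_{11}$ one finds a nonscalar matrix commuting with all $A_n,B_n$, or a vanishing entry of $S_2$). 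Without identifying these closing arguments, the ``finite verification'' you defer to cannot be completed, so the proof has a genuine gap.
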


\begin{proof}  
Let us assume that $u_1\neq u_2$.

Combining Lemma \ref{34},  Corollary \ref{Pnn-1} and  \eqref{lambdan}, we can compute explicitly the matrices $A_n$ and $B_n$,  which also must admit a sequence of positive definite matrices  $\{S_n\}_{n\in\mathbb N_0}$ satisfying  \eqref{AB}, i.e., 
\begin{equation*}
A_{n+1}=S_{n}^{-1}S_{n+1,}\quad \text{and } \quad S_{n}B_{n}\text{ is hermitian},\quad \forall n\in\mathbb N_0,
\end{equation*}

From Proposition \ref{SnAn} we have that $A_n$ is a  diagonal matrix, for any $n\in\mathbb N$. 
For $n=1$, by looking the entry $(1,2)$,  
we obtain that
\begin{equation}\label{v}
 \left( {  u_1}\,c_{{22}}-{  u_2}\,c_{{11}} \right) v+2\,{  u_1}\,{
  u_2}-2\,{  u_1}\,c_{{22}}-2\,{  u_2}\,c_{{11}}=0.
\end{equation}

We start considering the case  $\left( {u_1}\,c_{{22}}-{u_2}\,c_{{11}} \right)=0$.  In this case, from \eqref{v}, we obtain $c_{11}=u_1/2$ and $c_{22}=u_2/2$. 
Now from the condition of that $S_nB_n$ is a hermitian matrix, for $n=0,1,2,3$, we get a linear system of equations for the variables $u_1,u_2, v, c_{21}$ and $c_{12}$. It is matter of straightforward computations to verify that all the solutions lead to a contradiction: in most of these cases, one can check that there exists a non-scalar matrix commuting with every $A_n$ and $B_n$ for $n\in\mathbb N_0$, implying that the matrix weight $W$ reduces (see Theorem \ref{red}); in the other cases we obtain that the diagonal matrix $S_2$ has a null entry, which is a contradiction since every $S_n$ is positive definite.

Now, we can assume that $\left( {  u_1}\,c_{{22}}-{  u_2}\,c_{{11}} \right)\neq0$. Therefore, from \eqref{v}, we can write $v$ in terms of $u_1$,$u_2$,$c_{11}$ and $c_{22}$.
From the condition of $A_2$ being a diagonal matrix (Proposition \ref{SnAn}) we have two conditions:
$$
\left\{
\begin{aligned}
  \left( {  u_1}-{  u_2} \right)  \left( {  u_1}\,c_{{22}}-{  u_2}
\,c_{{11}}+2\,{  u_2}-4\,c_{{22}} \right)  \left( {{  u_1}}^{2}c_{{
22}}-{  u_1}\,{  u_2}\,c_{{11}}-2\,{  u_1}\,{  u_2}+2\,{  u_1}\,c
_{{22}}+2\,{  u_2}\,c_{{11}} \right) =0,\\
 \left( {  u_1}-{  u_2} \right)  \left( {  u_1}\,c_{{22}}-{  u_2}
\,c_{{11}}-2\,{  u_1}+4\,c_{{11}} \right)  \left( {  u_1}\,{  u_2}
\,c_{{22}}-{{  u_2}}^{2}c_{{11}}+2\,{  u_1}\,{  u_2}-2\,{  u_1}\,c
_{{22}}-2\,{  u_2}\,c_{{11}} \right)=0.
\end{aligned}
\right.
$$
Since  $u_1\neq u_2$, we have
\begin{equation}\label{aux}\left\{
\begin{split}
\overbrace{  \left( {  u_1}\,c_{{22}}-{  u_2}
\,c_{{11}}+2\,{  u_2}-4\,c_{{22}} \right) }^\text{\raisebox{.5pt}{\textcircled{\raisebox{-.9pt} {1}}} 
}
 \overbrace{\left( {{  u_1}}^{2}c_{{
22}}-{  u_1}\,{  u_2}\,c_{{11}}-2\,{  u_1}\,{  u_2}+2\,{  u_1}\,c
_{{22}}+2\,{  u_2}\,c_{{11}} \right)}^\text{\raisebox{.5pt}{\textcircled{\raisebox{-.9pt} {2}}} }
 =0, 
\\
\underbrace{ \left( {  u_1}\,c_{{22}}-{  u_2}
\,c_{{11}}-2\,{  u_1}+4\,c_{{11}} \right)}_\text{\raisebox{.5pt}{\textcircled{\raisebox{-.9pt} {3}}} }
 \underbrace{ \left( {  u_1}\,{  u_2}
\,c_{{22}}-{{  u_2}}^{2}c_{{11}}+2\,{  u_1}\,{  u_2}-2\,{  u_1}\,c
_{{22}}-2\,{  u_2}\,c_{{11}} \right)}_\text{\raisebox{.5pt}{\textcircled{\raisebox{-.9pt} {4}}} }
=0.
\end{split}\right.
\end{equation}

 By considering the possibilities given by \eqref{aux}, we can consider four cases which, after the corresponding elementary computations, lead to a contradiction: 
\begin{enumerate} 
\item $\raisebox{.5pt}{\textcircled{\raisebox{-.9pt} {1}}}=0=\raisebox{.5pt}{\textcircled{\raisebox{-.9pt} {3}}}$: \newline
From these conditions, we obtain $u_1=2c_{11}$ and $u_2=2c_{22}$, having then a contradiction since $\left( {  u_1}\,c_{{22}}-{  u_2}\,c_{{11}} \right)\neq0$.
\item $\raisebox{.5pt}{\textcircled{\raisebox{-.9pt} {1}}}=0=\raisebox{.5pt}{\textcircled{\raisebox{-.9pt} {4}}}$:\newline
In this case, we obtain
 $u_1=\frac{2(c_{11}-2-2c_{22})}{c_{11}-c_{22}-2}$ and $u_2=-\frac{2c_{22}}{c_{11}-c_{22}-2}$.  Hence, we obtain $\lambda_0=\mu_1$, which contradicts our hypothesis of no repetition among $\{\lambda_n,\mu_n\}_{n\in\mathbb N_0}$.
\item  $\raisebox{.5pt}{\textcircled{\raisebox{-.9pt} {2}}}=0=\raisebox{.5pt}{\textcircled{\raisebox{-.9pt} {3}}}$:\newline
This leads to $u_1=\frac{2c_{11}}{c_{11}-c_{22}+2} $ and $u_2= \frac{2(2c_{11}+2-c_{22})}{c_{11}-c_{22}+2}$. From this one obtains $\lambda_1=0=\mu_0$, which contradicts our hypothesis of no repetition among $\{\lambda_n,\mu_n\}_{n\in\mathbb N_0}$.
\item  $\raisebox{.5pt}{\textcircled{\raisebox{-.9pt} {2}}}=0=\raisebox{.5pt}{\textcircled{\raisebox{-.9pt} {4}}}$:\newline
These conditions imply $u_1=\frac{2(c_{11}-c_{22})}{c_{11}+c_{22}+2}$ and $u_2=-\frac{2(c_{11}-c_{22})}{c_{11}+c_{22}+2}$, which in turn imply $\lambda_0=\mu_1$, which contradicts our hypothesis of no repetition among $\{\lambda_n,\mu_n\}_{n\in\mathbb N_0}$.
\end{enumerate}
All these cases give a  contradiction as consequence of the assumption $u_1 \neq u_2$.
Therefore  $u_1=u_2$ and we concluded the proof. 
\end{proof}

\medskip

\begin{theorem}
We have that 
\begin{equation}
\label{UVC}
U=\big(c_{11}  +  c_{22}+  \tfrac{1}{2}v(c_{11}  -   c_{22}) \big) I, \quad
V=
\begin{pmatrix}
v & 0 \\
0 & 0%
\end{pmatrix}, \quad C=
\begin{pmatrix}
c_{11} & -\frac{(c_{11}-c_{22}-2)(c_{11}-c_{22}+2)}{4c_{21}} \\
c_{21} & c_{22}
\end{pmatrix},
\end{equation}
with $v,c_{11},c_{22},c_{21}\in\mathbb{R}$ and $v,c_{21}\neq 0$.
\end{theorem}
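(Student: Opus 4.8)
The plan is to continue the line of reasoning developed in Propositions \ref{UVdiagonales}--\ref{U}, which already pin down that $U=uI$ is scalar, that $V=\left(\begin{smallmatrix}v&0\\0&0\end{smallmatrix}\right)$ with $v\neq 0$, and that $C$ is not triangular, so $c_{12},c_{21}\neq 0$. What remains is to extract two relations: the value of $u$ in terms of the $c_{ii}$ and $v$, and the value of $c_{12}$ in terms of $c_{11},c_{22},c_{21}$. First I would set $U=uI$ in equation \eqref{lambdan}, so that $\lambda_n=-n(n-1+u)-v$ and $\mu_n=-n(n-1+u)$; in particular $\lambda_n-\mu_n=-v$ is constant, and the denominators appearing in Corollary \ref{Pnn-1}, such as $\lambda_n-\lambda_{n-1}=-2(n-1)-u$ and $\mu_n-\lambda_{n-1}=-2(n-1)-u+v$, become explicit linear functions of $n$. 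This makes $P_n^{n-1}$ and $P_n^{n-2}$ completely explicit, hence via Lemma \ref{34} the matrices $B_n$ and $A_{n+1}$ become explicit rational functions of $n$ and of the parameters $u,v,c_{11},c_{22},c_{12},c_{21}$.

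Next I would impose the two structural constraints from \eqref{AB} that have not yet been fully used: that each $A_n$ is diagonal (already known from Proposition \ref{SnAn}) and that $S_nB_n$ is hermitian, where the $S_n$ are the positive-definite diagonal matrices reconstructed recursively from $A_{n+1}=S_n^{-1}S_{n+1}$ with $S_0$ diagonal. Forcing the off-diagonal $(1,2)$ and $(2,1)$ entries of $A_n$ to vanish for small values of $n$ (say $n=1,2$) gives polynomial equations in the parameters; because $c_{12}c_{21}\neq 0$, the relation $(\lambda_{n-2}-\mu_n)(\lambda_{n-1}-\mu_n)$-type denominators never blow up and one can clear them. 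I expect the $n=1$ condition on the $(1,2)$ entry of $A_1$ (or equivalently the hermiticity of $S_1B_1$) to yield the single linear-in-$v$ equation that, solved, gives
\[
u=c_{11}+c_{22}+\tfrac12 v(c_{11}-c_{22}),
\]
which is exactly the claimed expression for $U$. Then a second, independent condition — plausibly the $(1,2)$ entry of $A_2$ being zero, now using the just-derived value of $u$ — should collapse to an equation of the form $4c_{12}c_{21}=-(c_{11}-c_{22}-2)(c_{11}-c_{22}+2)$, giving the stated formula for $c_{12}$. Substituting both back reproduces \eqref{UVC}, and the hypotheses $v\neq 0$, $c_{21}\neq 0$ are inherited from Remark \ref{rem} and Proposition \ref{C}.

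The main obstacle is purely computational rather than conceptual: one must carry out the explicit inversion of $(\lambda_n-\Lambda_{n-1})$ and $(\mu_n-\Lambda_{n-2})$ with $u$ left symbolic, assemble $B_n$ and $A_n$ from Lemma \ref{34}, and then verify that the low-$n$ instances of the diagonality of $A_n$ and the hermiticity of $S_nB_n$ are not only necessary but, after using \eqref{lambdan}, sufficient to force precisely the two identities above with no spurious branches. In practice one checks $n=1,2,3$ to be safe; the earlier propositions already exhibited the pattern that the only alternative solution branches violate either positivity of some $S_n$ or the no-repetition hypothesis on $\{\lambda_n,\mu_n\}$, and I would invoke exactly that dichotomy here. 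So the write-up will state the explicit forms of $P_n^{n-1}$, $P_n^{n-2}$, $B_n$, $A_n$ for $n\le 2$, note that the vanishing of two specified off-diagonal entries yields \eqref{v}-type linear relations, solve them, and remark that the remaining cases are discarded as in the proof of Proposition \ref{U}.
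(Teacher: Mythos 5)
Your first step is sound: with $u_1=u_2=u$, the vanishing of the $(1,2)$ entry of $A_1$ is precisely equation \eqref{v} specialized to the scalar case, which factors as $u\bigl((c_{22}-c_{11})v+2u-2c_{11}-2c_{22}\bigr)=0$; the branch $u=0$ forces $\mu_1=0=\mu_0$ and is excluded by the no-repetition hypothesis, so you do recover $u=c_{11}+c_{22}+\tfrac12 v(c_{11}-c_{22})$. (The paper obtains the same identity from a different source, namely the $(1,1)$ entry of the $t^{n-2}$-coefficient identity \eqref{eq}, but your route to this first relation is legitimate.)

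The gap is in the second relation. The condition that the $(1,2)$ entry of $A_2$ (or of any $A_n$) vanishes can never produce $4c_{12}c_{21}=-(c_{11}-c_{22}-2)(c_{11}-c_{22}+2)$: by Corollary \ref{Pnn-1} and Lemma \ref{34}, every ingredient of $(A_n)_{12}$ --- namely $(P_n^{n-2})_{12}$, $(P_{n+1}^{n-1})_{12}$, $(P_n^{n-1})_{11}(B_n)_{12}$ and $(P_n^{n-1})_{12}(B_n)_{22}$ --- is linear in $c_{12}$ and contains no $c_{21}$ at all (the product $c_{12}c_{21}$ only ever enters the \emph{diagonal} entries). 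Hence $(A_n)_{12}=c_{12}\cdot F_n(c_{11},c_{22},u,v)$, and after dividing by $c_{12}\neq0$ the diagonality of $A_n$ constrains only $c_{11},c_{22},u,v$; it cannot couple $c_{12}c_{21}$ to $(c_{11}-c_{22})^2-4$. The paper gets that coupling from a condition your plan omits entirely: the three-term recurrence must hold in \emph{all} coefficients of $t$, not just the top two used in Lemma \ref{34} to define $A_n$ and $B_n$, and the $t^{n-2}$ coefficient yields the identity $P_n^{n-3}-P_{n+1}^{n-2}-P_{n}^{n-2}B_n-P_{n-1}^{n-2}A_n=0$ (equation \eqref{eq}), whose $(2,1)$ entry carries the factor $c_{11}^2+c_{22}^2-2c_{11}c_{22}-4+4c_{12}c_{21}$ multiplied by $n$-dependent factors that cannot vanish for every $n$; imposing the identity for all $n$ forces that factor to be zero. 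Your fallback --- hermiticity of $S_nB_n$ chained through $A_{n+1}=S_n^{-1}S_{n+1}$ --- could in principle reach $c_{12}c_{21}$, since the diagonal entries of $A_{n+1}$ do contain that product, but you neither carry this out nor show that it pins down the stated value; so as written the derivation of the formula for $c_{12}$ is missing.
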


\begin{proof}
From Propositions \ref{C} and \ref{U}, we know that 
\begin{align*}
U=u I, &&
V=
\begin{pmatrix}
v & 0 \\
0 & 0
\end{pmatrix}, &&
 C=
\begin{pmatrix}
c_{11} & c_{12} \\
c_{21} & c_{22}
\end{pmatrix},
\end{align*}
with $v,c_{11},c_{22},c_{21},c_{12},u\in\mathbb{R}$ and $v,c_{21},c_{12}\neq 0$.

For any $n\in\mathbb N$, let us focus on  the $t^{n-2}$ coefficient in the three-term recurrence relation \eqref{ttrr}:
$$tP_{n}=P_{n+1}+P_{n}B_{n}+P_{n-1}A_{n}.$$
If  write $P_n=\sum_{k=0}^n P_n^k t^k$, we have
\begin{equation}\label{eq}
P_n^{n-3}-P_{n+1}^{n-2}-P_{n}^{n-2}B_n-P_{n-1}^{n-2}A_n=0.
\end{equation}
From Lemmas \ref{33} and \ref{34}, one can compute explicitly  the entries of the matrix equation \eqref{eq}.
The entry $(1,1)$ is given by
\[
\frac{
c_{12}c_{21}\, v\,n(n-1)\,  (4n+3 u-8)\, (vc_{11}-vc_{22}+2c_{11}+2c_{22}-2u)}{%
3(4n+v+2u-6)(2n-v+u-4)(2n+u-4)(4n-v+2u-6)(2n+v+u-2)(2n+u-2)}=0.
\]%
Then , since $c_{12},c_{21},v\neq 0$, we have that this equation holds for every $n\in\mathbb N$ only if  \[
u=\tfrac{1}{2}v(c_{11}-c_{22})+c_{11}+c_{22}.
\]%
 
Now, by looking at the entry $(2,1)$ of the matrix equation \eqref{eq}, we have
\begin{eqnarray*}
&&\frac{(3vc_{11}-3vc_{22}+8n+2v+6c_{11}+6c_{22}-16)}{%
(3vc_{11}-3vc_{22}+12n+2v+6c_{11}+6c_{22}-24)(3vc_{11}-3vc_{22}+12n+2v+6c_{11}+6c_{22}-12)
}
\times\\
&&\frac{(c_{11}^{2}+c_{22}^{2}-2c_{11}c_{22}-4+4c_{12}c_{21})}{%
(vc_{11}-vc_{22}+4n+2v+2c_{11}+2c_{22}-8)(vc_{11}-vc_{22}+4n+2c_{11}+2c_{22}-6)%
}
\times
 \\
&&\frac{4c_{21}\,v\,n(n-1)}{(vc_{11}-vc_{22}+4n+2v+2c_{11}+2c_{22}-4)}=0,
\end{eqnarray*}%
which holds for every $n\in\mathbb N$ only if
\[
-4c_{21}c_{12}=(c_{11}-c_{22}-2)(c_{11}-c_{22}+2).
\]%
Hence, we have that $C,U,V$ are given by \eqref{UVC}.
\end{proof}

Finally, we can make a change of parameters to express our operator (or the matrices $C,U,V$) in a simpler form. Also, we can consider now only three parameters instead of the four $v,c_{11},c_{22},c_{21}$, as stated in the following theorem.

\begin{theorem}\label{CUVAnBn}
The pair $(W,D)$ is, up to a scalar translation of $D$ (see Remark \ref{rem}), always equivalent to a pair in which the matrices $C,U,V$ are of the form
\[
C=%
\begin{pmatrix}
\alpha +2-\frac{\alpha -\beta }{v} & \frac{v+\alpha -\beta }{v} \\
\frac{v-\alpha +\beta }{v} & \alpha +2+\frac{\alpha -\beta }{v}%
\end{pmatrix}%
,\qquad U=\left( \alpha +\beta +4\right) \mathrm{\ I},\qquad V=%
\begin{pmatrix}
v & 0 \\
0 & 0%
\end{pmatrix}%
,
\]
with $\alpha, \beta, v\in\mathbb R$.
Furthermore, the matrices involved in the three-term recurrence relation \eqref{ttrr} are now given by

\begin{align*}
A_n&={\frac {n   \left( \alpha+n+1 \right) \left( \beta+n+1 \right) \left( \alpha+\beta+n+2
 \right)  }{
 \left( \alpha+\beta+2\,n+v+2 \right)  \left( \alpha+\beta+2\,n+3 \right)  \left( \alpha+\beta+2
\,n+2 \right) ^{2} \left( \alpha+\beta+2\,n-v+2 \right)  \left( \alpha+\beta+2\,n+1
 \right) }}\times 
\\ & \qquad 
\begin{pmatrix}
 \left( v+2\,n+\beta+\alpha \right)  \left( \alpha+\beta+2\,n-v+4 \right) &0 \\
0 &  \left( -v+2\,n+\beta+\alpha \right)  \left( \alpha+\beta+2\,n+v+4 \right)%
\end{pmatrix}
\end{align*}

\noindent and the entries of $B_n$ are
\begin{align*}
\left( B_{n}\right) _{11} &=-n\frac{(\alpha+n+1)v+\beta-\alpha}{
(\alpha+\beta+2n+2)v}+(n+1)\frac{(\alpha+n+2)v+\beta-\alpha}{
(\alpha+\beta+2n+4)v}, \\
\left( B_{n}\right) _{12} &={\frac { \left( v+\alpha-\beta  \right)  \left( \alpha+\beta -v+2 \right) }{v \left(\alpha+\beta +2
\,n-v+2 \right)  \left( \alpha+\beta +2\,n-v+4 \right) }},\\
\left( B_{n}\right) _{21} &={\frac { \left( v-\alpha+\beta \right)  \left( \alpha+\beta +v+2 \right) }{v \left( \alpha+\beta +2
\,n+v+2 \right)  \left( \alpha+\beta +2\,n+v+4 \right) }}
, \\
\left( B_{n}\right) _{22} &=-n\frac{(\alpha+n+1)v-\beta+\alpha}{%
(\alpha+\beta+2n+2)v}+(n+1)\frac{(\alpha+n+2)v-\beta+\alpha}{%
(\alpha+\beta+2n+4)v}
.
\end{align*}
\end{theorem}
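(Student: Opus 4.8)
The plan is to split the argument into two essentially independent parts: a normalization that brings the matrices of \eqref{UVC} into the displayed three‑parameter shape, and a direct substitution into the already–established recursion formulas to read off $A_n$ and $B_n$.

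For the normalization I start from \eqref{UVC}, where the free data are the four reals $v,c_{11},c_{22},c_{21}$ with $v,c_{21}\neq 0$ (here $c_{12}$ is already determined, and $c_{12},c_{21}\neq 0$ forces $c_{11}-c_{22}\neq\pm 2$). Conjugating by a diagonal matrix $M=\mathrm{diag}(1,m)$, $m\in\mathbb R\setminus\{0\}$, replaces $D$ by the equivalent operator $M^{-1}DM$ and $W$ by $M^{*}WM$, which is again a positive, irreducible weight with the same (diagonal, no–repetition) eigenvalues; since $U$ is scalar and $V$ diagonal, this leaves $U,V$ fixed and sends $C$ to the matrix with entries $c_{11},c_{22},m c_{12},c_{21}/m$, in particular keeping the product $c_{12}c_{21}$ fixed. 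I then set $\alpha:=\tfrac12(c_{11}+c_{22})-2$ and $\beta:=\alpha+\tfrac v2(c_{11}-c_{22})$; a short check gives $c_{11}=\alpha+2-\tfrac{\alpha-\beta}{v}$, $c_{22}=\alpha+2+\tfrac{\alpha-\beta}{v}$, $U=\big(c_{11}+c_{22}+\tfrac12 v(c_{11}-c_{22})\big)I=(\alpha+\beta+4)I$, and $c_{12}c_{21}=-\tfrac14\big((c_{11}-c_{22})^{2}-4\big)=\tfrac{(v+\alpha-\beta)(v-\alpha+\beta)}{v^{2}}$. Since $v-\alpha+\beta=\tfrac v2\big(2+c_{11}-c_{22}\big)\neq 0$, the choice $m=\tfrac{2c_{21}}{c_{11}-c_{22}+2}$ makes the new $(2,1)$ entry equal $\tfrac{c_{11}-c_{22}+2}{2}=\tfrac{v-\alpha+\beta}{v}$, and then the new $(1,2)$ entry is forced to be $\tfrac{v+\alpha-\beta}{v}$ because the product is unchanged. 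This yields exactly the $C$ in the statement; the ``scalar translation'' alluded to is the one already used in Remark~\ref{rem} to set $v_{22}=0$.

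For $A_n$ and $B_n$ I invoke Lemma~\ref{34}, i.e. $B_n=P_n^{n-1}-P_{n+1}^{n}$ and $A_n=P_n^{n-2}-P_{n+1}^{n-1}-P_n^{n-1}B_n$, together with the explicit entries of $P_n^{n-1},P_n^{n-2}$ from Corollary~\ref{Pnn-1}. Because now $U=(\alpha+\beta+4)I$, \eqref{lambdan} gives $\lambda_n=-n(n+\alpha+\beta+3)-v$ and $\mu_n=-n(n+\alpha+\beta+3)$, so every eigenvalue difference occurring in Corollary~\ref{Pnn-1} is linear in $n$: for instance $\lambda_n-\lambda_{n-1}=\mu_n-\mu_{n-1}=-(\alpha+\beta+2n+2)$, $\mu_n-\lambda_{n-1}=-(\alpha+\beta+2n+2-v)$, $\lambda_n-\mu_{n-1}=-(\alpha+\beta+2n+2+v)$, $\lambda_{n-1}-\lambda_n=\alpha+\beta+2n+2$, $\lambda_{n-2}-\lambda_n=2(\alpha+\beta+2n+1)$, and analogously for the remaining ones (all nonzero, by the no–repetition hypothesis). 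Substituting these and the explicit $c_{ij}$ into Corollary~\ref{Pnn-1} expresses $P_n^{n-1}$ and $P_n^{n-2}$ as explicit $2\times2$ matrices of rational functions of $n$, and then Lemma~\ref{34} produces $A_n$ and $B_n$. For $B_n$ each entry is a difference of two rational terms that telescopes to a single fraction — e.g. $(B_n)_{12}=\tfrac{v+\alpha-\beta}{v}\big[\tfrac{n+1}{\alpha+\beta+2n+4-v}-\tfrac{n}{\alpha+\beta+2n+2-v}\big]$, whose numerator over the common denominator collapses to $\alpha+\beta-v+2$ — giving the four displayed entries. For $A_n$ one forms $P_n^{n-2}-P_{n+1}^{n-1}$ and subtracts the matrix product $P_n^{n-1}B_n$; the off‑diagonal entries cancel (as they must, since $A_n$ is diagonal by Proposition~\ref{SnAn}, a convenient internal check) and the diagonal entries simplify to the stated product of four linear factors over the displayed degree‑six denominator, times $n$.

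The conceptual content is therefore light: one diagonal gauge transformation plus an affine change of the parameters $c_{11},c_{22}$, followed by substitution into formulas already proved. The genuine difficulty, and the step most prone to error, is the algebraic simplification of $A_n$: one must expand and combine a $2\times2$ matrix product with two further matrices, each entry a rational function carrying up to six linear factors in numerator and denominator, and then verify the large cancellation down to the compact closed form. This is exactly the sort of long technical computation the paper warns about, and it is most safely carried out (or at least checked) with a computer algebra system; the vanishing of the off‑diagonal part of $A_n$ and the fact that the final fractions keep the same shape for every $n$ serve as running consistency checks.
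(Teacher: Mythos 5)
Your proposal is correct and follows essentially the same route as the paper: the paper also conjugates by the (scalar multiple of your) diagonal matrix $\operatorname{diag}\bigl(\tfrac{c_{11}-c_{22}+2}{2c_{21}},1\bigr)$, applies the identical change of parameters $\alpha=\tfrac12(c_{11}+c_{22})-2$, $\beta=\alpha+\tfrac v2(c_{11}-c_{22})$, and then obtains $A_n$ and $B_n$ by substituting the eigenvalue differences into Lemma~\ref{34} and Corollary~\ref{Pnn-1}. Your worked-out telescoping of $(B_n)_{12}$ is a correct instance of the computation the paper leaves implicit.
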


\begin{proof}
Let us consider the expressions in \eqref{UVC} and let us define the matrix 
$$M=
\begin{pmatrix}
\frac{c_{11}-c_{22}+2}{2c_{21}}&0\\
0&1
\end{pmatrix}.
$$
Observe that $c_{11}-c_{22}+2\neq0$ and $c_{21}\neq0$. Otherwise, from \eqref{UVC}, we would have that $C$ is triangular and then diagonal (Lemma \ref{C}), therefore the entire operator would be diagonal.

 If we conjugate the pair $(W,D)$ by a matrix $M$ we obtain the pair $(M^*WM,M^{-1}DM)$. 
 Hence 
we have that 
 \begin{align*}
M^{-1}UM&= U=\left( \frac{1}{2}v(c_{11}-c_{22})+c_{11}+c_{22}\right) I, 
&
M^{-1}VM= V=
\left(
\begin{matrix}
v & 0 \\
0 & 0%
\end{matrix}\right)
,
\\
M^{-1}CM&=
\left(
\begin{matrix}
c_{11} & -\frac{1}{2}\left( c_{11}-c_{22}-2\right) \\
\frac{1}{2}\left( c_{11}-c_{22}+2\right) & c_{22}%
\end{matrix}\right).
&
\end{align*}

Applying the change of parameters
$$
\alpha =\tfrac{1}{2}(c_{11}+c_{22})-2 \quad \text{ and } \quad
\beta =\tfrac{1}{2}(c_{11}-c_{22})v+\tfrac{1}{2}(c_{11}+c_{22})-2
$$
we obtain the expression of $C$ and $U$. The eigenvalues are given by 
$$ \lambda_n= -n(n+\alpha+\beta+3)-v, \quad \text{ and } \quad \mu_n= -n(n+\alpha+\beta+3).$$

 By combining carefully Lemma \ref{34}  and Corollary \ref{Pnn-1} we obtain the expressions of  $B_n$ and $A_n$ given in the statement.  
\end{proof}

\begin{theorem}\label{tabv}
The parameters $\alpha$, $\beta$ and $v$
satisfy the following inequalities 
\begin{equation}\label{abv}
|\alpha-\beta|< |v| < \alpha+\beta +2.
\end{equation}
\end{theorem}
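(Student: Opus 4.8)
Throughout put $s=\alpha+\beta$; the assertion \eqref{abv} splits into the three inequalities $s+2>0$, $|v|<s+2$, and $|\alpha-\beta|<|v|$. The only tools I would use are Proposition~\ref{SnAn} (for $n\ge1$ the matrix $A_n$ is diagonal and positive definite, and for $n\ge0$ so is $S_n$), the relation \eqref{AB}, Proposition~\ref{C}, and the explicit expressions of $A_n$ and $B_n$ from Theorem~\ref{CUVAnBn}; the plan is to extract the three inequalities one at a time. First I would use the determinant. From the formula for $A_n$ one reads off
\[
\det A_n=\rho(n)^2\,\big[(s+2n)^2-v^2\big]\,\big[(s+2n+4)^2-v^2\big],
\]
where $\rho(n)$ denotes the scalar rational prefactor of $A_n$. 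Invertibility of $A_n$ gives $\rho(n)\ne0$ and positive definiteness gives $\det A_n>0$, so, writing $g(n):=(s+2n)^2-v^2$, we get $g(n)\,g(n+2)>0$ for all $n\ge1$. Since $g(n)\to+\infty$, if $g(m)\le0$ for some $m\ge1$ one may pick $m$ maximal; then $g(m+2)>0$ and hence $g(m)g(m+2)\le0$, a contradiction. Therefore $g(n)>0$ for every $n\ge1$; in particular $(s+2)^2>v^2$, i.e.\ $|s+2|>|v|$, and more generally $|s+2n|>|v|$ for all $n\ge1$, a fact used below.

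Next I would rule out $s+2<0$, which upgrades $|s+2|>|v|$ to $|v|<s+2$. Suppose $s+2<-|v|$. Writing $(A_n)_{11}=\rho(n)(s+2n+v)(s+2n-v+4)$ and $(A_n)_{22}=\rho(n)(s+2n-v)(s+2n+v+4)$, the numerator of $(A_n)_{11}$ (as a polynomial in $n$) vanishes at $n_0=\tfrac12(-v-s)$ and that of $(A_n)_{22}$ at $n_1=\tfrac12(v-s)$, and the hypothesis $s+2<-|v|$ forces $n_0,n_1>1$. But $(A_n)_{11}$ and $(A_n)_{22}$ are ratios of products of linear factors that tend to $\tfrac1{16}$ as $n\to\infty$ and must be strictly positive at every integer $n\ge1$; one then checks that a zero of such a function in $(1,\infty)$ is incompatible with this — it cannot fall on an integer (there $(A_n)_{ii}=0$), and in the remaining configurations, after separating the finitely many exceptional parameter values at which the factor $s+2n\pm v$ cancels against a denominator factor (only a few integer values of $v$) and the values $s\in\{-3,-4,\dots\}$, one arrives at a contradiction either with positive definiteness of $A_1$ or $A_2$, or with the irreducibility of $W$ via Theorem~\ref{red}. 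Hence $s+2>0$, and together with the first step $|v|<\alpha+\beta+2$.

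For the remaining inequality I would use the hermiticity in \eqref{AB}. Since $S_n$ is diagonal and positive definite and $S_nB_n$ is hermitian, $(S_n)_{11}(B_n)_{12}=(S_n)_{22}(B_n)_{21}$ for every $n$; substituting the explicit $B_n$ and cancelling the common $v$,
\[
\frac{(S_n)_{11}}{(S_n)_{22}}=\frac{(v-\alpha+\beta)(\alpha+\beta+v+2)\,(\alpha+\beta+2n-v+2)(\alpha+\beta+2n-v+4)}{(v+\alpha-\beta)(\alpha+\beta-v+2)\,(\alpha+\beta+2n+v+2)(\alpha+\beta+2n+v+4)} .
\]
By the previous step $\alpha+\beta+2>|v|$, so $\alpha+\beta\pm v+2>0$ and all four $n$-dependent factors are strictly positive for $n\ge1$; moreover $v\pm(\alpha-\beta)\ne0$ because $c_{12}=\tfrac{v+\alpha-\beta}{v}$ and $c_{21}=\tfrac{v-\alpha+\beta}{v}$ are nonzero (Proposition~\ref{C}). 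Since the left-hand side is positive, we conclude $(v-\alpha+\beta)(v+\alpha-\beta)>0$, that is $v^2-(\alpha-\beta)^2>0$, i.e.\ $|\alpha-\beta|<|v|$. Combining the three steps yields \eqref{abv}.

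The crux is the middle step: proving $\alpha+\beta+2>0$ is equivalent to showing that the rational functions $n\mapsto(A_n)_{ii}$ have no zero or pole at any integer $\ge1$ and stay positive there, and this is exactly the long, case-by-case verification the authors flag, because of the cancellations among numerator and denominator factors and the exceptional integer values of the parameters; by contrast the determinant step and the hermiticity step are short once those positivity facts are in hand.
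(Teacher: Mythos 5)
Your first and third steps are correct, and the first is a nice variant of the paper's argument: extracting $g(n)g(n+2)>0$ with $g(n)=(\alpha+\beta+2n)^2-v^2$ from $\det A_n>0$ and propagating positivity from $n=\infty$ downwards is essentially the paper's use of the denominator of $(B_n)_{12}(B_n)_{21}$, packaged more cleanly. The genuine gap is the middle step, $\alpha+\beta+2>0$, and your sketch for it points in the wrong direction in two concrete ways. First, the proposed sources of contradiction are not the right ones: when $\alpha+\beta+2<-|v|$ the obstruction is localized at the index $k$ where $\alpha+\beta+2k+v$ changes sign, i.e.\ $k\approx\tfrac12(-v-\alpha-\beta)$, which is unbounded as $\alpha+\beta\to-\infty$; no check of $A_1$ and $A_2$ alone can suffice, and Theorem~\ref{red} (irreducibility) plays no role here. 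Moreover the assertion that ``a zero of such a function in $(1,\infty)$ is incompatible with positivity at every integer'' is false for a ratio of products of linear factors — the sign can flip back between consecutive integers — so the whole burden of the step is in the unperformed case analysis. The working argument (the paper's) plays $(A_k)_{22}>0$ against $(A_{k-1})_{22}>0$ after first deducing $0<v<1$, $\alpha+k+1>0$, $\beta+k+1>0$, and ends by contradicting $v>|\alpha-\beta|$ via $(\alpha+k)(\beta+k)<0$.

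This exposes the second, structural problem: your ordering makes $|\alpha-\beta|<|v|$ unavailable when you attack $\alpha+\beta+2>0$, yet the inequalities $\alpha+k+1>0$, $\beta+k+1>0$ and the final contradiction in the only argument I know for that step all require $v>|\alpha-\beta|$. The fix is to reorder: your step 1 already gives $g(1)=(\alpha+\beta+2)^2-v^2>0$ and $g(n+1)g(n+2)>0$ for $n\ge0$, and the latter is exactly the denominator of $(B_n)_{12}(B_n)_{21}$, so the hermiticity identity $(S_n)_{11}(B_n)_{12}=(S_n)_{22}(B_n)_{21}$ forces the numerator $\bigl(v^2-(\alpha-\beta)^2\bigr)\bigl((\alpha+\beta+2)^2-v^2\bigr)$ to be positive, whence $v^2>(\alpha-\beta)^2$ \emph{without} knowing the sign of $\alpha+\beta+2$. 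With $|\alpha-\beta|<|v|<|\alpha+\beta+2|$ in hand one can then run the localized argument at $n=k$ and $n=k-1$ to exclude $\alpha+\beta+2<0$. As written, your proposal asserts the hardest inequality rather than proving it.
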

\begin{proof}
Let us focus on the fact that $S_n$ is a positive definite diagonal  matrix  and $S_nB_n$ is an Hermitian matrix for any $n\in\mathbb N_0$ (see \eqref{AB}).  In particular we have that,  for any $ n\geq 0$, $(B_n)_{12}$ and $(B_n)_{21}$ are numbers with both positive or both negative.
From Theorem \ref{CUVAnBn} we  have that 
$$(B_n)_{12}(B_n)_{21}=\frac{(v+\alpha-\beta)(v-\alpha+\beta)(\alpha+\beta+2-v)(\alpha+\beta+2+v)}
{(\alpha+\beta+2n+2-v)(\alpha+\beta+2n+2+v)(\alpha+\beta+2n+4-v)(\alpha+\beta+2n+4+v)}$$
for any $n\in\mathbb N_0$.
This implies that 
\begin{equation}\label{d1}
(v^2-(\alpha-\beta)^2)((\alpha+\beta+2)^2-v^2) > 0
\end{equation}
and 
\begin{equation}
\\\label{d2}
(\alpha+\beta+2n-v)(\alpha+\beta+2n+v)(\alpha+\beta+2n+2-v)(\alpha+\beta+2n+2+v) > 0,
\end{equation}
for all $n\in \mathbb N$.

If we change $v$ by $-v$ in \eqref{d1} and \eqref{d2} we obtain exactly the same conditions, then it is enough  to consider the situation  $v>0$.  The condition  \eqref{d1} implies that 
$$|\alpha-\beta|<|v|<|\alpha+\beta+2| \quad \text{ or }\quad |\alpha+\beta+2|<|v|<|\alpha-\beta|.$$

If $|\alpha+\beta+2|<|v|$ then $-v<\alpha+\beta+2<v$, hence  there exists $k\in \mathbb N$ such that 
$$\alpha+\beta+2k <v<\alpha+\beta+2k+2$$ 
and $-v<\alpha+\beta+2k$. Thus \eqref{d2}, for $n=k$, gives a contradiction.   

Therefore we have $$|\alpha-\beta|<v<|\alpha+\beta+2|.$$ 
Now, we prove that $\alpha+\beta+2>0$. If $\alpha+\beta+2<0$ then we have $\alpha+\beta+2+v<0$.
  Thus there exists $k\in\mathbb N$ such that 
\begin{equation}\label{daux}
\alpha+\beta+2k+v <0<\alpha+\beta+2k+2+v.
\end{equation}
From \eqref{d2}, with $n=k$, we get that $0<\alpha+\beta+2k-v+2$. In particular this implies 
\begin{equation}\label{vv}0<v<1, \qquad \alpha+k+1>0, \qquad  \beta+k+1>0.
\end{equation}

On the other hand we have that $A_n$ is a diagonal definite positive matrix, then the entry  $(A_n)_{22}$ is a positive number.  
From Theorem \ref{CUVAnBn} we obtain that, for all $n\in \mathbb N$,
\begin{equation} \label{d3}
\frac { \left( \alpha+n+1 \right)  \left( \beta+n+1 \right)  \left( \alpha+\beta+n+2 \right)  \left( \alpha+\beta+2n-v \right)  \left( \alpha+\beta+2n+v+4 \right) }{
 \left(\alpha+\beta+2n+3 \right) \left( \alpha+\beta+2n+1 \right)  \left( \alpha+\beta+2n-v+2 \right) \left( \alpha+\beta+2n+v+2 \right)   }>0. 
 \end{equation}
For $n=k$, by using the \eqref{vv},  we obtain  $\frac{ \left( \alpha+\beta+k+2 \right)}{ \left( \alpha+\beta+2k+1 \right) }<0$.  If $k=1$, this gives a contradiction. If $k\geq 2$, this implies that $ \alpha+\beta+k+2  <0<  \alpha+\beta+2k+1 .$
Then, from \eqref{d3} with $n=k-1$ we obtain 
\begin{equation*} 
\frac { \left( \alpha+k \right)   \left( \beta+k \right)  \left( \alpha+\beta+2k-2-v \right)  }{
 \left( \alpha+\beta+2k-1 \right)  \left( \alpha+\beta+2k-v \right) \left( \alpha+\beta+2k+v \right)   }<0. 
 \end{equation*}
From, \eqref{daux}, we then obtain 
$${(\alpha+k)(\beta+k)}<0.$$ 

  In the case   $\alpha +k>0$ and $\beta+k<0$ we use that $\alpha-\beta<v$, to obtain $\alpha
+\beta+2k+v>0$,  which is a contradiction.  In the case   $\alpha +k<0$ and $\beta+k>0$ we use that $-\alpha+\beta<v$, to obtain again the contradiction $\alpha
+\beta+2k+v>0$. 
Therefore we have  $\alpha+\beta+2>0$ and we have proved the statement in the theorem.
\end{proof}

Notice that if we exclude $v$ from the condition \eqref{abv} we have $|\alpha-\beta|<\alpha+\beta+2$, which is equivalent to $-1<\alpha$ together with $-1<\beta$. As the reader may have noticed, these conditions on $\alpha$ and $\beta$ are the same to those for the parameters in the scalar Jacobi orthogonal polynomials.

There is no further reduction on the parameters $\alpha$, $\beta$ or $v$ to be made, as we show in the next  section. This is done in the most explicit way: by giving all the pairs $(D,W)$.

\section{The Three Parameter Family}\label{Existence}

In this section, we give explicitly a three-parameters family of $2\times 2$-matrix irreducible weights $\{W\}$, on the interval $(0,1)$, such that for each $W$ there exists a hypergeometric symmetric operator 
\[
D= t(1-t) \frac{d^2}{dt^2}+ (C-tU) \frac {d}{dt}-V
\] 
of order two that diagonalizes on the basis of monic orthogonal polynomials $\{P_n\}_{n\in\mathbb N_0}$, i.e.,
$$
DP_n=P_n \Lambda_n, \quad \text{ for all } n\in{\mathbb{N}_0},
$$
 with 
 $\{\Lambda_n\}_{n\in{\mathbb{N}_0}}$ diagonal matrices.

\begin{proposition}\label{W}
Let $\alpha, \beta ,v\in\mathbb R$ such that $|\alpha-\beta|< |v| < \alpha+\beta +2$. For $%
t\in (0,1)$, let
\begin{equation}\label{wabv}
W_{\alpha,\beta,v}(t) =t^{\alpha }\left( 1-t\right) ^{\beta }\,\left(
W_{2}t^{2}+W_{1}t+W_{0}\right),
\end{equation}
with
\begin{align*}
W_{2} &=
\begin{pmatrix}
\frac{v( v+2+\alpha +\beta)}{v+\alpha-\beta} & 0 \\
0 & \frac{v(- v+2+\alpha +\beta)}{v-\alpha+\beta}%
\end{pmatrix},
&
W_{1} &=
\begin{pmatrix}
-\left( v+\alpha +\beta +2\right) & ( \alpha +\beta+2 ) \\
( \alpha +\beta+2 ) & -\left(- v+\alpha +\beta +2\right)%
\end{pmatrix},
\\
W_{0} &= (\alpha+1)
\begin{pmatrix}
1 & -1 \\
-1 & 1%
\end{pmatrix}.
\end{align*}
Thus, $W_{\alpha,\beta,v}$ is a matrix weight and the differential operator given by $$D= t(1-t) \frac{%
d^2}{dt^2}+ (C-tU) \frac {d}{dt}-V,$$ with
\[
C =
\begin{pmatrix}
\alpha +2-\frac{\alpha -\beta }{v} & \frac{v+\alpha-\beta }{v} \\
\frac {v-\alpha+\beta}v & \alpha +2+\frac{\alpha -\beta }{v}%
\end{pmatrix}%
,\qquad U =\left( \alpha +\beta +4\right) \mathrm{\ I}, \qquad V=%
\begin{pmatrix}
v & 0 \\
0 & 0%
\end{pmatrix}%
,
\]
  is symmetric with respect to    $W_{\alpha,\beta,v}$.
\end{proposition}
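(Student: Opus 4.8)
The plan is to verify two things separately: that $W:=W_{\alpha,\beta,v}$ is a genuine $2\times2$ matrix weight on $(0,1)$, and that $D$ is symmetric with respect to it.

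\emph{That $W$ is a weight.} Write $W(t)=t^{\alpha}(1-t)^{\beta}Q(t)$ with $Q(t)=W_{2}t^{2}+W_{1}t+W_{0}$. Finiteness of every moment $\int_{0}^{1}t^{k}W(t)\,dt$ is immediate from $\alpha,\beta>-1$ (which follows from $|\alpha-\beta|<\alpha+\beta+2$, itself a consequence of the hypothesis; see the remark after Theorem \ref{tabv}), and $W(t)=W(t)^{*}$ is clear because $W_{0},W_{1},W_{2}$ are real symmetric. The real content is $Q(t)\succ 0$ for $t\in(0,1)$, which I would get from Sylvester's criterion: first, $Q(t)_{11}=\tfrac{v(v+\alpha+\beta+2)}{v+\alpha-\beta}t^{2}-(v+\alpha+\beta+2)t+(\alpha+1)$ has positive leading and constant coefficients and discriminant $\tfrac{(v+\alpha+\beta+2)(v-\alpha+\beta)(v-\alpha-\beta-2)}{v+\alpha-\beta}$, which is negative under $|\alpha-\beta|<|v|<\alpha+\beta+2$ after tracking signs, so $Q(t)_{11}>0$ for every $t$; second — and this is the computation forcing the precise shape of $W_{0},W_{1},W_{2}$ — expanding the $2\times 2$ determinant gives the identity
\[
\det Q(t)=\frac{v^{2}\bigl((\alpha+\beta+2)^{2}-v^{2}\bigr)}{v^{2}-(\alpha-\beta)^{2}}\;t^{2}(1-t)^{2},
\]
whose prefactor is strictly positive, again exactly by the hypothesis. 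Hence $Q(t)\succ 0$, so $W(t)\succ 0$, for all $t\in(0,1)$, and $W$ is a matrix weight.

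\emph{That $D$ is symmetric with respect to $W$.} Integrating by parts in $\langle DP,Q\rangle_{W}-\langle P,DQ\rangle_{W}$ (cf.\ \cite{DG04,GT07}) reduces the symmetry of $D=t(1-t)\tfrac{d^{2}}{dt^{2}}+(C-tU)\tfrac{d}{dt}-V$ with respect to $W$ to the two ``symmetry equations''
\begin{align*}
2\bigl(t(1-t)W\bigr)' &= (C^{T}-tU)\,W+W\,(C-tU),\\
\bigl(t(1-t)W\bigr)''-\bigl((C^{T}-tU)\,W\bigr)' &= VW-WV
\end{align*}
on $(0,1)$, together with the boundary conditions $\lim_{t\to 0^{+},\,1^{-}}t(1-t)W(t)=0$ and $\lim_{t\to 0^{+},\,1^{-}}\bigl(t(1-t)W(t)\bigr)'-(C^{T}-tU)W(t)=0$. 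Substituting $W=t^{\alpha}(1-t)^{\beta}Q$ and dividing by $t^{\alpha}(1-t)^{\beta}$, each symmetry equation turns into an identity between matrix polynomials in $t$ of degree at most four whose coefficients are polynomials in $\alpha,\beta,v$, which one checks by matching the coefficients of $1,t,t^{2},\dots$, using $U=(\alpha+\beta+4)I$ and the explicit $C,W_{0},W_{1},W_{2}$. This is elementary but lengthy, and is the main obstacle of the proof; it is presumably also how the formula for $W$ was obtained, namely by solving the first symmetry equation with the prescribed $C,U,V$ and then fixing the remaining free constants from the second equation and the boundary conditions.

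\emph{The boundary terms.} Since $t(1-t)W(t)=t^{\alpha+1}(1-t)^{\beta+1}Q(t)$ with $\alpha,\beta>-1$, the first boundary condition holds. For the second, a short asymptotic expansion gives $\bigl(t(1-t)W\bigr)'-(C^{T}-tU)W=t^{\alpha}\bigl((\alpha+1)I-C^{T}\bigr)W_{0}+O(t^{\alpha+1})$ as $t\to 0^{+}$, and as $t\to 1^{-}$ the leading term is a multiple of $(1-t)^{\beta}\bigl(C^{T}-(\alpha+3)I\bigr)Q(1)$. Both leading coefficients vanish for structural reasons: $(\alpha+1)I-C^{T}$ has both columns equal to $\tfrac1v\bigl(v-(\alpha-\beta),\,v+(\alpha-\beta)\bigr)^{T}$ and is therefore annihilated on the right by $W_{0}=(\alpha+1)\left(\begin{smallmatrix}1&-1\\-1&1\end{smallmatrix}\right)$ (whose rows add up to zero); and $C^{T}-(\alpha+3)I$ has its two rows proportional and is annihilated on the right by $Q(1)$, as one sees from $Q(1)_{11}=\tfrac{(\beta+1)(v-\alpha+\beta)}{v+\alpha-\beta}$, $Q(1)_{12}=Q(1)_{21}=\beta+1$, $Q(1)_{22}=\tfrac{(\beta+1)(v+\alpha-\beta)}{v-\alpha+\beta}$. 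Thus all the boundary conditions are met, and $D$ is symmetric with respect to $W_{\alpha,\beta,v}$.
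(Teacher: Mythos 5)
Your proposal is correct and follows essentially the same route as the paper: positivity of $W_{\alpha,\beta,v}$ via the explicit determinant (your formula for $\det Q$ matches the paper's formula for $\det W$) plus positivity of the $(1,1)$ entry (your negative-discriminant computation is the paper's ``positive minimum of the quadratic'' argument in disguise), and symmetry via the standard pair of symmetry ODEs plus boundary conditions, with the central coefficient-matching asserted rather than written out — just as in the paper, which also only sketches it. Your versions of the second ODE and of the second boundary condition are the adjoint/equivalent forms (modulo the first ODE) of the paper's, and your structural cancellations $\bigl((\alpha+1)I-C^{T}\bigr)W_{0}=0$ and $\bigl(C^{T}-(\alpha+3)I\bigr)Q(1)=0$ check out.
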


\begin{proof}
To ease the notation in this proof, let us denote $W=W_{\alpha,\beta,v}$. The matrix-function $W(t)$ is symmetric for any $t\in(0,1)$ and, since $\alpha, \beta> -1$, all the moments are finite. Hence, to prove that it is a weight we only need to prove that it is a definite matrix for every $t\in (0,1)$. The determinant of $W$ is given by 
$$t^{2(\alpha +1)}\left( 1-t\right) ^{2(\beta+1)} v^2{\frac {  \left( -v+\alpha+\beta+2
 \right)  \left( v+\alpha+\beta+2 \right) }{ \left( v+\alpha-\beta \right)  \left( v-(\alpha-
\beta) \right) }},
$$
which is clearly positive since $|\alpha-\beta|< |v| < \alpha+\beta +2$. Therefore, to prove that $W(t)$ is positive-definite it suffices to prove that the entry $(1,1)$ is positive. This entry $(1,1)$ is given by the product of the positive number $t^{\alpha }\left( 1-t\right)^\beta$ and the polynomial on $t$
$$\alpha+1-t \left(  v+\alpha+\beta+2 \right) +{t}^{2} \; {\frac {v \left( v+\alpha+\beta+2
 \right) }{v+\alpha-\beta}}.
$$
It is very easy to see that the minimum of this polynomial is 
$$\frac{1}{4}\,{\frac { \left( -v+\alpha+\beta+2 \right)  \left( v-(\alpha-\beta) \right) }{v}},
$$
is clearly positive since $|\alpha-\beta|< |v| < \alpha+\beta +2$. Hence, $W$ is positive-definite and therefore a matrix-weight.

It is matter of a careful integration by parts to see that the condition of
symmetry for a differential operator of order two
is equivalent to a set of three differential equations involving the weight $%
W$ and the coefficients of the differential operator $D$ and the boundary
conditions, see \cite{GPT03,DG04}. 
In this case, the differential operator $D$ is symmetric with respect to $W$ if and only if  
\begin{equation}  \label{SDE1}
\left\{\; \begin{split}
2\left(  t (1-t) W\right) ^{\prime }& =W(C-tU)+(C-tU)^{\ast }W, \\
\left( t(1-t)\right) ^{\prime \prime }-\left( W(C-tU)\right) ^{\prime
}& =WV-V^{\ast }W,
\end{split}\right.
\end{equation}
together  with the boundary conditions
\begin{equation}\label{SDE2}
\underset{t\rightarrow 0,1}{\lim }t(1-t) W(t)  =0\quad \text{ and } \quad  \underset{%
t\rightarrow 0,1}{\lim }\big(W(t)(C-tU)-(C-tU)^{\ast }W(t)\big)=0.
\end{equation}

 Let us observe that,
\begin{align*}
 (W(t)t(1-t))^{\prime }=  t^\alpha (1-t)^\beta \Big (&  - (\alpha+\beta+4) W_2 t^3+( (\alpha+3)W_2-(\alpha+\beta+3)W_1)t^2 \\
 & + ( (\alpha+2)W_1-(\alpha+\beta+2)W_0)t+ (\alpha+1)W_0\Big),
 \end{align*}
and 
\begin{align*}
  W(t)& (C-tU)+  (C-tU)^{\ast }W(t)= t^\alpha (1-t)^\beta \Big (  - 2(\alpha+\beta+4) W_2 t^3\\
  +& ( W_2C+C^*W_2-2(\alpha+\beta+4)W_1)t^2  + ( W_1C+C^*W_1-2(\alpha+\beta+4)W_0)t+W_0C+C^*W_0 \Big),
\end{align*}
 Now, it is easy to verify that the first equation of \eqref{SDE1} is satisfied.

The second equation in \eqref{SDE1} is equivalent (by using the first one) to 
\begin{equation}\label{3eqsymm}
  \big(W(C-tU)-(C-tU)^*W\big)^\prime= 2(V^*W-WV).
\end{equation}

We have   $W_2(C-tU)-(C-tU)^* W_2=  2v \left(\begin{smallmatrix}
  0& 1\\ -1& 0
\end{smallmatrix}\right)
=-(W_1(C-tU)-(C-tU)^* W_1)$ and $W_0(C-tU)-(C-tU)^*W_0=0$.
Thus  $$W(C-tU)-(C-tU)^*W= -2vt^{\alpha+1} (1-t)^{\beta+1} \left(\begin{smallmatrix}
  0& 1\\ -1& 0
\end{smallmatrix}\right).$$
On the other hand $$V^*W-WV= -vt^{\alpha} (1-t)^{\beta} (\alpha+1-t(\alpha+\beta+2))\left(\begin{smallmatrix}
  0& 1\\ -1& 0
\end{smallmatrix}\right),$$
and \eqref{3eqsymm} holds.

Finally, the boundary conditions $\lim_{t\to \pm 1} t(1-t)W(t)=0$ and $\lim_{t\to \pm 1} W(t)F_1(t)-F_1^*(t)W(t)=0$ hold because $\alpha, \beta>-1$.
\end{proof}

\begin{proposition}\label{wirr}
The weight $W_{\alpha,\beta,v}$ is irreducible.
\end{proposition}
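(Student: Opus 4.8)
The plan is to invoke Theorem \ref{red}: it suffices to show that the only matrices commuting with every $A_n$ and every $B_n$ are the scalar ones. Since, by Proposition \ref{SnAn}, each $A_n$ is a positive definite diagonal matrix, any matrix $T$ commuting with all of them must itself be diagonal \emph{provided} the two diagonal entries of $A_n$ differ for at least one $n$; and if instead all $A_n$ were scalar we would argue directly from the $B_n$. So the first step is to inspect the explicit formula for $A_n$ in Theorem \ref{CUVAnBn} and check that its $(1,1)$ and $(2,2)$ entries are not identically equal in $n$: their ratio is $\frac{(v+2n+\alpha+\beta)(\alpha+\beta+2n-v+4)}{(-v+2n+\alpha+\beta)(\alpha+\beta+2n+v+4)}$, which equals $1$ for all $n$ only if $v=0$, contradicting $|\alpha-\beta|<|v|$ from Theorem \ref{tabv}. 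Hence $v\neq 0$ forces some $A_n$ to have distinct diagonal entries, so any $T$ in the commutant is diagonal, say $T=\operatorname{diag}(t_1,t_2)$.

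The second step is to use a single $B_n$ to conclude $t_1=t_2$. Writing $T B_n = B_n T$ and comparing the $(1,2)$ entries gives $t_1 (B_n)_{12} = (B_n)_{12} t_2$, i.e. $(t_1-t_2)(B_n)_{12}=0$. From Theorem \ref{CUVAnBn},
\[
(B_n)_{12}=\frac{(v+\alpha-\beta)(\alpha+\beta-v+2)}{v(\alpha+\beta+2n-v+2)(\alpha+\beta+2n-v+4)},
\]
and the numerator $(v+\alpha-\beta)(\alpha+\beta-v+2)$ is nonzero precisely because Theorem \ref{tabv} gives $|\alpha-\beta|<|v|<\alpha+\beta+2$, which forces $v+\alpha-\beta\neq 0$ and $\alpha+\beta+2-v\neq 0$; the denominator is nonzero for all $n$ by the same strict inequalities (its factors are $\alpha+\beta+2n\pm\text{something}$ shifted, and one checks they never vanish). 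Therefore $(B_n)_{12}\neq 0$ for every $n$, and $(t_1-t_2)(B_n)_{12}=0$ yields $t_1=t_2$, so $T$ is scalar. By Theorem \ref{red}, $W_{\alpha,\beta,v}$ does not reduce, i.e. it is irreducible.

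The only mildly delicate point — and the one I would write out carefully — is verifying that the relevant numerators and denominators in the formulas for $A_n$ and $B_n$ genuinely do not vanish under the hypothesis $|\alpha-\beta|<|v|<\alpha+\beta+2$; this is exactly the role of Theorem \ref{tabv} and is a short case check on signs rather than a real obstacle. Everything else is immediate from the structure ``diagonal positive definite $A_n$ with a nonconstant ratio of entries'' plus ``one off-diagonal entry of $B_n$ is nonzero''.
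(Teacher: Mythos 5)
Your argument is correct, but it takes a genuinely different route from the paper. The paper proves irreducibility directly from the definition: it supposes $M^{*}W_{\alpha,\beta,v}(t)M$ is diagonal for some nonsingular $M$, expands the $(1,2)$ entry as a polynomial in $t$, and reads off from the constant and linear coefficients the relations $(m_{12}-m_{22})(m_{11}-m_{21})=0$ and $m_{22}m_{21}-m_{11}m_{12}=0$, which together force $\det M=0$; this needs only the explicit formula \eqref{wabv} and two lines of algebra. Your route instead goes through Theorem \ref{red} and the recurrence coefficients of Theorem \ref{CUVAnBn}: you show the commutant of $\{A_n,B_n\}$ is scalar because some $A_n$ has distinct diagonal entries (the ratio you compute equals $1$ only if $v=0$, excluded by $|\alpha-\beta|<|v|$) and $(B_n)_{12}\neq 0$. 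The sign checks you defer all go through under $|\alpha-\beta|<|v|<\alpha+\beta+2$: one gets $v+\alpha-\beta\neq 0$, $\alpha+\beta+2-v>0$, and $\alpha+\beta+2n+2-v>0$ for all $n\geq 0$, so both numerator and denominator of $(B_n)_{12}$ are nonzero. The one point you should make explicit to avoid an appearance of circularity is that Theorem \ref{CUVAnBn} is stated under Hypothesis \ref{h}, which already assumes irreducibility of $W$; you need to observe that the formulas for $A_n$ and $B_n$ are obtained (via Lemma \ref{33}, Corollary \ref{Pnn-1} and Lemma \ref{34}) solely from the eigenvalue equation $DP_n=P_n\Lambda_n$ and the no-repetition property, both of which are available for $W_{\alpha,\beta,v}$ once Proposition \ref{W} is established, with no appeal to irreducibility. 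With that remark added your proof is complete; it is heavier machinery than the paper's direct computation, but it has the merit of tying irreducibility to the spectral data $\{A_n,B_n\}$ rather than to the pointwise form of $W$.
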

\begin{proof}
Let us assume that there exists a nonsingular matrix
$M=\left(\begin{smallmatrix}
m_{11}  & m_{12} \\
m_{21} & m_{22}%
\end{smallmatrix}\right).
$
such that
\[
M^{*}W_{\alpha,\beta,v}(t)M=%
\begin{pmatrix}
w_1(t) & 0 \\
0 & w_2(t)%
\end{pmatrix}.
\]
The entry $(1,2)$ of $M^{*}W_{\alpha,\beta,v}(t)M$ is the following polynomial on $t$
\begin{multline*}
\left(m_{11}m_{12}\frac{v( v+2+\alpha +\beta)}{v+\alpha-\beta}+m_{22}m_{21}\frac{v(-v+2+\alpha +\beta)}{v-\alpha+\beta}\right)t^{2} \\
+\big ((\alpha+\beta+2)(m_{12}-m_{22})(m_{21}-m_{11}) +
(m_{22}m_{21}-m_{11}m_{12})\, v\big ) t  + (m_{12}-m_{22})(m_{11}-m_{21})(\alpha+1).
\end{multline*}
Then, by looking at the constant coefficient, since $\alpha>-1$ we have that
\begin{equation}\label{irr}
(m_{12}-m_{22})(m_{11}-m_{21})=0.
\end{equation}

Now by looking  at the linear coefficient we have
that
$
(m_{22}m_{21}-m_{11}m_{12})v=0, 
$
which implies 
\begin{equation}\label{irr2}
m_{22}m_{21}-m_{11}m_{12}=0,
\end{equation}
 since $v\neq 0$. Now, combining \eqref{irr} and \eqref{irr2} we have that $\det(M)=0$
which is  a contradiction.
\end{proof}

%


Combining the previous results in this paper we can now state the following theorem.
\begin{theorem}\label{main}
An operator satisfies Hypothesis \ref{h} if and only if it is equivalent to an operator of the form 
$$D_{\alpha,\beta,v,v_2}= t(1-t) \frac{%
d^2}{dt^2}+ (C-tU) \frac {d}{dt}-V,$$ with
\[
C =
\begin{pmatrix}
\alpha +2-\frac{\alpha -\beta }{v} & \frac{v+\alpha-\beta }{v} \\
\frac {v-\alpha+\beta}v & \alpha +2+\frac{\alpha -\beta }{v}%
\end{pmatrix}%
,\qquad U =\left( \alpha +\beta +4\right) \mathrm{\ I}, \qquad V=%
\begin{pmatrix}
v & 0 \\
0 & 0%
\end{pmatrix}+v_2\mathrm{\ I}
,
\]
and $\alpha, \beta ,v,v_2\in\mathbb R$ such that $|\alpha-\beta|< |v| < \alpha+\beta +2$.
Every $D_{\alpha,\beta,v,v_2}$ is symmetric with respect to the irreducible weight $W_{\alpha,\beta,v}$ given in Proposition \ref{wabv}.

Furthermore, let $\{P_{n}\}_{n\in\mathbb N_0}$ be the sequence of matrix-valued orthogonal monic
polynomials associated to $W_{\alpha,\beta,v}$. Then,  $P_{n}$
is an eigenfunction of the differential operator $D$ with diagonal eigenvalue
\[
\Lambda _{n}=%
\begin{pmatrix}
\lambda _{n} & 0 \\
0 & \mu _{n}%
\end{pmatrix},
\]%
where,
\begin{align*}
\lambda _{n}& =-n(n-1)-n\left( \alpha +\beta +4\right) -v \quad  \text{ and } \quad 
\mu _{n}  =-n(n-1)-n\left( \alpha +\beta +4\right).
\end{align*}
Also, if two operators $D_{\alpha,\beta,v,0}$ and $D_{\alpha',\beta',v',0}$ are equivalent, then  
 $(\alpha,\beta,v,0)=(\alpha',\beta',v',0)$.
\end{theorem}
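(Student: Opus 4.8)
The proof is largely a synthesis of the results already in hand, together with two short elementary computations (the ``no repetition'' inequality and a comparison of coefficients). I describe how the pieces assemble.

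\textbf{The direction ``Hypothesis~\ref{h} $\Rightarrow$ equivalent to some $D_{\alpha,\beta,v,v_2}$''.} Suppose $D$ satisfies Hypothesis~\ref{h}. By Remark~\ref{rem} we may replace $D$ by $D-v_{22}I$, recording the shift as $v_2$, so that $V=\left(\begin{smallmatrix}v&0\\0&0\end{smallmatrix}\right)$ with $v\neq 0$. Propositions~\ref{UVdiagonales}, \ref{C}, \ref{U} and Theorem~\ref{UVC} then force the triple $(C,U,V)$ into the form~\eqref{UVC}, depending only on $v,c_{11},c_{22},c_{21}$. Conjugating by the diagonal matrix $M$ of the proof of Theorem~\ref{CUVAnBn} and applying the change of parameters $\alpha=\tfrac12(c_{11}+c_{22})-2$, $\beta=\tfrac12(c_{11}-c_{22})v+\tfrac12(c_{11}+c_{22})-2$ turns $(C,U,V)$ into the triple displayed in the statement, and undoing the normalization adds $v_2I$ to $V$, producing exactly $D_{\alpha,\beta,v,v_2}$. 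Finally Theorem~\ref{tabv} gives $|\alpha-\beta|<|v|<\alpha+\beta+2$. Thus $D$ is equivalent, via a diagonal matrix and up to the scalar shift, to some $D_{\alpha,\beta,v,v_2}$ with the required inequalities.

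\textbf{The converse, and the eigenvalues.} It suffices to check that $D_{\alpha,\beta,v,v_2}$ itself satisfies Hypothesis~\ref{h}, since by the Remark following Theorem~\ref{red} a constant conjugation sends a hypergeometric operator symmetric with respect to $W$ to a hypergeometric operator symmetric with respect to $M^*WM$, carries the monic orthogonal polynomials of $W$ to those of $M^*WM$, and preserves (ir)reducibility. Proposition~\ref{W} shows $D_{\alpha,\beta,v,0}$ is symmetric with respect to $W_{\alpha,\beta,v}$; since $-v_2I$ is symmetric with respect to every weight, so is $D_{\alpha,\beta,v,v_2}$, and $W_{\alpha,\beta,v}$ is irreducible by Proposition~\ref{wirr}. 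By Proposition~\ref{UVdiagonales}, $\Lambda_n=-n(n-1)-nU-V$, that is $\lambda_n=-n(n-1)-n(\alpha+\beta+4)-v-v_2$ and $\mu_n=-n(n-1)-n(\alpha+\beta+4)-v_2$ (for $v_2=0$ this is the stated formula; a nonzero $v_2$ merely shifts both families by $-v_2$). Writing $s=\alpha+\beta+3>1$ (as $\alpha,\beta>-1$ by the inequalities), one computes $\lambda_j-\lambda_k=\mu_j-\mu_k=(k-j)(k+j+s)$, which is nonzero for $j\neq k$ since then $k+j+s\geq s+1>0$; and $\lambda_j-\mu_k=(k-j)(k+j+s)-v$, so $\lambda_j=\mu_k$ would force $v=(k-j)(k+j+s)$, which equals $0$ when $k=j$ and has absolute value at least $s+1=\alpha+\beta+4>\alpha+\beta+2$ when $k\neq j$ --- impossible since $0<|v|<\alpha+\beta+2$. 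Hence the $\Lambda_n$ are diagonal with no repetition, and $D_{\alpha,\beta,v,v_2}$ satisfies Hypothesis~\ref{h}.

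\textbf{Uniqueness.} Let $M$ be nonsingular with $M^{-1}D_{\alpha,\beta,v,0}M=D_{\alpha',\beta',v',0}$. The second-order coefficient is the scalar $t(1-t)$, hence unchanged; comparing the first-order and zeroth-order coefficients gives $M^{-1}UM=U'$, $M^{-1}CM=C'$ and $M^{-1}VM=V'$. The first yields $(\alpha+\beta+4)I=(\alpha'+\beta'+4)I$, so $\alpha+\beta=\alpha'+\beta'$. From the third, $V$ and $V'$ are conjugate and hence cospectral; their nonzero eigenvalues are $v$ and $v'$, both nonzero, so $v=v'$, whence $V=V'$ and $M$ commutes with $V$. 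Since $V$ has two distinct eigenvalues, $M$ is diagonal, so $C$ and $M^{-1}CM=C'$ share the same diagonal; as $c_{11}+c_{22}=2\alpha+4$ this gives $\alpha=\alpha'$, and then $\beta=\beta'$. Therefore $(\alpha,\beta,v,0)=(\alpha',\beta',v',0)$.

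\textbf{Main obstacle.} None of the steps is hard: the analytic content --- positive definiteness of $W_{\alpha,\beta,v}$, the symmetry equations and boundary conditions, and irreducibility --- was already settled in Propositions~\ref{W} and~\ref{wirr}, and the new ingredients above are elementary. The one point deserving care is the bookkeeping around ``equivalence'': conjugating $DP_n=P_n\Lambda_n$ by a constant matrix $M$ produces $\tilde D\tilde P_n=\tilde P_n(M^{-1}\Lambda_nM)$ with $\tilde P_n=M^{-1}P_nM$, so the diagonal form of $\Lambda_n$ persists precisely when $M$ is diagonal up to a permutation of coordinates --- which is exactly the shape of the intertwiner delivered by the forward direction, so the equivalence in the statement is compatible with Hypothesis~\ref{h}.
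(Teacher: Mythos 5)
Your proof is correct and follows essentially the same route as the paper: both directions are assembled from Remark \ref{rem}, Propositions \ref{UVdiagonales}--\ref{U}, Theorems \ref{UVC}, \ref{CUVAnBn}, \ref{tabv} and Propositions \ref{W}, \ref{wirr}, and uniqueness is obtained by comparing the order-zero and order-one coefficients of the conjugated operators. Your write-up is in fact somewhat more complete than the paper's, which dismisses the no-repetition verification as ``readily seen'' where you carry out the factorization $(k-j)(k+j+\alpha+\beta+3)$ explicitly.
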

\begin {proof}

Since $D$ is symmetric, by
\cite[Prop. 2.10 and 2.7]{GT07}, we know that $P_n$ is eigenfunction of $D$, 
with eigenvalues $\Lambda_n= n(n-1)-nU-V$   for every $n\in\mathbb N_0$, obtaining the expression given in the theorem. It is readily seen that there is no repetition among $\{\lambda_n,\mu_n\}_{n\in\mathbb N_0}$ given that $|\alpha-\beta|< |v| < \alpha+\beta +2$.

Hence, we only need to prove the last statement, since the first ones follow from Theorem \ref{CUVAnBn}, Theorem \ref{tabv}, Proposition \ref{W} and Proposition \ref{wirr}. 

Let us assume that there exists a non-singular matrix $M$ such that 
$$
D_{\alpha,\beta,v,v_2}=M^{-1}D_{\alpha',\beta',v',v'_2}M.
$$
Then, by looking at the coefficient of order zero we have
$
M\left(\begin{smallmatrix}
v & 0 \\
0 & 0%
\end{smallmatrix}\right)=\left(\begin{smallmatrix}
v' & 0 \\
0 & 0%
\end{smallmatrix}\right)M
$. Hence $v=v'$ and the matrix $M$ is diagonal. 

By looking at the coefficient of order one it is easy to obtain that $\alpha=\alpha'$ and $\beta=\beta'$.
\end{proof}

\subsection{Some comments on the vector-valued hypergeometric function}

\

The matrix-valued hypergeometric equation 
 $$ t(1-t) \frac{%
d^2 F}{dt^2}+ (C-tU) \frac {dF}{dt}-VF=0,$$
 has been studied in \cite{T03}. If the eigenvalues of $C$ are not in $-\mathbb N_0$, the analytic solutions, around $t=0$, 
  are determined by $F_0=F(0)$ and given explicitly by 
 the function 

 $$
F(t)=\,_{2}H_{1}\left(
{\genfrac{}{}{0pt}{}{U,V}{C}}
;t\right) F_0 =\sum_{k= 0}^{n} \frac{t^{k}}{k!}\left[ C,U,V\right] _{k}F_0, \qquad |t|<1.$$
where the symbol $[ C,U,V]_{n}$ is inductively defined by $\left[ C,U,V\right] _{0} =I$ and

\begin{eqnarray*}
\left[ C,U,V\right] _{n+1} =\left( C+n\right) ^{-1}\left( n^{2}+n\left(
U-1\right) +V\right) \left[ C,U,V\right] _{n}\quad \text{ for } \quad n\in\mathbb N_0.
\end{eqnarray*}
Observe that in our case, from the expression of the matrix $C$ in Proposition \ref{W}, it is easy to see that the eigenvalues of $C$ are not in$-\mathbb N_0$ since they are given by $\alpha+1$ and $\alpha+3$, with $-1<\alpha$.

\

\subsection{An example coming from group representation theory: The Gegenbauer matrix-weight.}

In \cite{PZ16} the authors  study $2\times  2$ matrix-valued orthogonal polynomials
associated with spherical functions in the $q$-dimensional sphere $S^q$
( originally $q$ was a natural
number, but this parameter was later extended to any real positive number). The weight matrix,
depending on parameters $ 0 < p < q$, is given by
\begin{equation*}
  W_{p,q}(x)= (1-x^2)^{\tfrac q 2 -1} \begin{pmatrix}
  p\,x^2+q-p & -q x\\ -q x & (q-p)x^2+p
\end{pmatrix},\qquad x\in [-1,1].
\end{equation*} 
 The monic orthogonal polynomials $\{R_n\}$, associated to this weight matrix satisfy
$DR_n= R_n \Lambda_n $, where
$$D= (1-x^2) \frac{d^2}{dx} -\Big( (q+2)x+2\left(\begin{smallmatrix}
  0&1\\1&0\end{smallmatrix}\right) \Big) \frac{d}{dx} -\left(\begin{smallmatrix}
    p&0\\0&q-p  \end{smallmatrix}\right), $$
and the eigenvalue   is given by the following diagonal matrix
$$\Lambda_n(D)= \begin{pmatrix}
  -n(n+q+1)-p & 0\\ 0& -n(n+q+1)-q+p
\end{pmatrix}.$$

This example is a particular case of the family presented in this paper: after the change of variable $x=1-2t$, we obtain that 
$D=D_{\alpha, \beta, v, v_2}$ and $W=2^{q-1} W_{\alpha, \beta, v}$  with
$$ \alpha=\beta= \frac q2-1, \quad v=2p-q, \quad v_2=q-p.$$

\bibliography{ref18}

\bibliographystyle{alpha}

\end{document}